\documentclass[reqno]{amsart}
\usepackage[utf8]{inputenc}
\usepackage[a4paper]{geometry}
\usepackage[english]{babel}
\usepackage{amsfonts, amsthm, amssymb, amsmath}
\usepackage{hyperref}
\usepackage{xcolor}
\usepackage{cleveref}
\usepackage{enumitem}

\theoremstyle{plain}
\newtheorem{theo}{Theorem}[section]
\newtheorem{prop}[theo]{Proposition}
\newtheorem{lemma}[theo]{Lemma}
\newtheorem{coro}[theo]{Corollary}

\newtheorem{question}[theo]{Question}
\theoremstyle{definition}
\newtheorem{rem}[theo]{Remark}
\newtheorem{example}[theo]{Example}

\newcommand{\C}{\mathbb{C}}
\newcommand{\D}{\mathbb{D}}

\title[Some properties of completely convex functions]{\textbf{Some properties of completely convex functions} \\ \small \today}
\date{}

\author[H. Arbeláez]{Hugo Arbeláez}
\address{Hugo Arbeláez, Escuela de Matemáticas, Universidad Nacional de Colombia, Medellín, Colombia}
\email{hjarbela@unal.edu.co}

\author[R. Hernández]{Rodrigo Hernández}
\address{Rodrigo Hernández, Facultad de Ingeniería y Ciencias, Universidad Adolfo Ibáñez, Av. Padre Hurtado 750, Viña del Mar, Chile}
\email{rodrigo.hernandez@uai.cl}

\thanks{The first author was supported by the Universidad Nacional de Colombia code Hermes XXXX.}

\newcommand{\pend}[1]{\par\noindent\textcolor{red}{\textbf{[PENDIENTE]} #1}\par}

\begin{document}

\maketitle

\begin{abstract}

We study the class $CC_\alpha$ of completely convex functions of order $\alpha$, defined through a two-point analogous of the classical convexity condition. We give several equivalent characterizations of $CC_\alpha$, together with representation formulas in terms of two auxiliary Schwarz functions linked by a compatibility equation. From these representations, we derive coefficient bounds, a subordination result $zf'/f$, and a two-point Taylor expansion in which the Schwarzian derivative $S_f$ arises naturally as the coefficient of the linear term. This yields a pointwise bound for $S_f$ and a bound for the pre-Schwarzian norm $\|P_f\|_{CC_\alpha}$, improving the classical Yamashita bound when $\alpha\leq 1-1/\sqrt2 $. We also show that, unlike the class $C_\alpha$ of convex functions of order $\alpha$, every $f\in CC_\alpha$ is bounded when $\alpha>0$.
\end{abstract}

\vspace{0.3cm}
\textbf{Keywords:} Completely convex mappings, Convex mappings of order $\alpha$, pre-Schwarzian derivative, coefficient bounds. \\

\textbf{Mathematics subject classification:} 30C45, 30C35. \\

\section{Introduction}

Let $\D=\{z\in\C:|z|<1\}$ be the unit disk. We denote by $\mathcal{A}$ the class of analytic functions $f : \D \to \C$ normalized by $f(0)=0$ and $f'(0)=1$. The classes of convex, starlike, convex of order $\alpha$, and starlike of order $\alpha$ functions are denoted $C$, $S^*$, $C_\alpha$, and $S^*_\alpha$, respectively, with $\alpha \in [0,1)$.
 
A domain is convex if and only if it is starlike with respect to each of its points. Suffridge \cite{S1970} expressed this in terms of the mapping itself: a function $f\in\mathcal A$ is convex if and only if
\[
  \operatorname{Re}\left\{\frac{2zf'(z)}{f(z)-f(x)}-\frac{z+x}{z-x}\right\}\geq0,\qquad x,z\in\D.
\]
For $x=z$ the expression reduces to $1+zf''(z)/f'(z)$, so the usual definition of convexity is the diagonal case of this two-point condition, and for convex functions nothing more is required. Convexity of order $\alpha$ is obtained by relaxing the one-point condition to $\operatorname{Re}\{1+zf''(z)/f'(z)\}>\alpha$ (see \cite{Rob1936}). Replacing $0$ by $\alpha$ in the two-point condition instead gives the class $CC_\alpha$ of \emph{completely convex} functions of order $\alpha$: those $f\in\mathcal A$ for which the real part above is at least $\alpha$ for all $x,z\in\D$ (see \eqref{eq:CCa}). Setting $x=z$ shows that $CC_\alpha\subseteq C_\alpha$, with equality for $\alpha=0$, and setting $x=0$ gives $CC_\alpha\subseteq S^*_{(1+\alpha)/2}$. For $\alpha>0$ the two ways of introducing the order $\alpha$ no longer agree: functions in $CC_\alpha$ are bounded, while $C_\alpha$ contains unbounded functions when $\alpha\leq1/2$. The Möbius maps $z/(1-\lambda z)$ belong to $CC_\alpha$ exactly when $|\lambda|\leq(1-\alpha)/(1+\alpha)$.

We are interested in what these functions look like. The two-point condition is equivalent to the existence of an analytic function $\delta:\D\times\D\to\D$ that controls the difference quotients of $f$, and it restricts $f$ more than its one-point consequences suggest. We use it to study the coefficients, the pre-Schwarzian and Schwarzian derivatives, and the growth and distortion of $f$ and $f'$ through subordination. The arguments rely only on classical tools (the Schwarz--Pick lemma, the Carath\'eodory inequality, the Herglotz representation, subordination \cite{MM2000}), and that they suffice says something about how much structure the kernel carries.

Section~2 starts from the description of $CC_\alpha$ through $\delta$, taken from \cite{paper}, and derives two representations of $f$ by Schwarz functions, a compatibility relation between them, and the starlikeness of order $\alpha$ of the odd part of $f$. In Section~3 the Schwarzian derivative appears as the coefficient of the linear term in a two-point expansion of the kernel. Using the derivatives of the kernel in both variables we obtain a pointwise inequality relating $P_f$ and $S_f$, and a bound for the pre-Schwarzian norm that improves Yamashita's estimate \cite{Y1999} when $\alpha\leq1-1/\sqrt2$. Section~4 collects coefficient estimates, among them $|a_2|\leq(1-\alpha)\sqrt{2/(2+\alpha)}$ and $|a_3-a_2^2|\leq(1-\alpha-|a_2|^2)/3$, which reduces to Trimble's inequality \cite{T1975} when $\alpha=0$. In Section~5 we show that the difference quotient of $f$ is subordinate to $(1-z)^{-(1-\alpha)}$, which gives the subordination of $zf'/f$ and two-sided bounds for $|f'|$.

\section{The class of completely convex functions}

The class of completely convex functions $CC_\alpha$, $\alpha \in [0,1)$, consists of functions
$f \in \mathcal{A}$ that satisfy
\begin{equation}\label{eq:CCa}
  \operatorname{Re}\!\left\{
    \frac{2zf'(z)}{f(z)-f(x)} - \frac{z+x}{z-x}
  \right\} \geq \alpha, \qquad x, z \in \D.
\end{equation}
Setting $x=z$ gives $CC_\alpha \subseteq C_\alpha$, and setting $x=0$ gives $CC_\alpha \subseteq S^*_{(1+\alpha)/2}$. Suffridge in \cite{S1970} shows that $CC_0 = C$.

The condition~\eqref{eq:CCa} is equivalent (see \cite[Rem.~2]{paper})
to the representation
\begin{equation}\label{eq:delta}
  \frac{f'(z)}{f(z)-f(x)}-\frac{1}{z-x}=\frac{(1-\alpha)\,\delta(x,z)}{1-z\,\delta(x,z)},
\end{equation}
where $\delta:\D \times \D \to \D$ is analytic and $w(x,z):=z\,\delta(x,z)$ satisfies $w(x,0)=0$ for every $x \in \D$. The representation \eqref{eq:delta} yields a quantitative
reformulation of $CC_\alpha$ that makes the role of $|\delta|\leq 1$ completely explicit.

\begin{theo}\label{prop:quant_char}
  Let $f\in\mathcal{A}$ be locally univalent and
\begin{equation}\label{eq:A}
  A = A(x,z) \;:=\; \frac{f'(z)}{f(z)-f(x)} -\frac{1}{z-x}.
\end{equation}
For $\alpha\in [0,1)$, the following statements are equivalent
\begin{enumerate}
\item [(i)] $f\in CC_\alpha$
\item [(ii)] 
\begin{equation}\label{eq:quant_char}
\frac{(1-|z|^2)\,|A|^2}{1-\alpha}+\alpha\leq
  \operatorname{Re}\left\{\frac{2z f'(z)}{f(z)-f(x)}-\frac{z+x}{z-x}\right\},\quad  x,z\in\D
\end{equation}
\item [(iii)] $\left|(1-|z|^2)A-(1-\alpha) \overline{z}\right|\leq 1-\alpha$, for all  $x,z\in \D.$
\end{enumerate}  
\end{theo}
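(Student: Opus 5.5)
Write $R(x,z)$ for the real part on the right of \eqref{eq:CCa} and \eqref{eq:quant_char}. We express it through $A$:
\[
\frac{2zf'(z)}{f(z)-f(x)}-\frac{z+x}{z-x}=2zA+\frac{2z}{z-x}-\frac{z+x}{z-x}=1+2zA .
\]
So $R=1+2\operatorname{Re}(zA)$. Every condition in the theorem is then a pointwise condition on the single complex number $A$, for fixed $z$. The plan is to prove (ii)$\Leftrightarrow$(iii) by a direct algebraic identity, (ii)$\Rightarrow$(i) trivially, and (i)$\Rightarrow$(iii) via the representation \eqref{eq:delta}.

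For (ii)$\Leftrightarrow$(iii), we square (iii):
\[
(1-|z|^2)^2|A|^2-2(1-\alpha)(1-|z|^2)\operatorname{Re}(zA)+(1-\alpha)^2|z|^2\le(1-\alpha)^2 .
\]
Dividing by $(1-\alpha)(1-|z|^2)>0$ gives
\[
\frac{(1-|z|^2)|A|^2}{1-\alpha}-2\operatorname{Re}(zA)\le 1-\alpha ,
\]
which is
\[
\frac{(1-|z|^2)|A|^2}{1-\alpha}+\alpha\le 1+2\operatorname{Re}(zA)=R .
\]
This is exactly \eqref{eq:quant_char}, and all the steps are reversible. The implication (ii)$\Rightarrow$(i) is immediate, since the left side of \eqref{eq:quant_char} is at least $\alpha$.

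For (i)$\Rightarrow$(iii), we use \eqref{eq:delta}: $A=(1-\alpha)\delta/(1-z\delta)$ with $|\delta|\le 1$. Here we must make sure $|\delta(x,z)|<1$, or at least $\le1$, at every point. The point $x=z$ should be handled by continuity, since $A$ extends analytically to the diagonal with value $f''(z)/(2f'(z))$. Fix $z$. The map $\delta\mapsto \delta/(1-z\delta)$ is a Möbius transformation, so it sends the closed unit disk onto a closed disk. Setting $u=1-z\delta$, the condition $|\delta|\le1$ becomes $|1-u|\le|z|$. We have
\[
\frac{\delta}{1-z\delta}=\frac{1}{z}\Big(\frac{1}{u}-1\Big),
\]
and the image of the disk $|u-1|\le|z|$ under $1/u$ is the disk with center $1/(1-|z|^2)$ and radius $|z|/(1-|z|^2)$. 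Hence $\delta/(1-z\delta)$ lies in the disk with center $\bar z/(1-|z|^2)$ and radius $1/(1-|z|^2)$. Multiplying by $(1-\alpha)(1-|z|^2)$ gives exactly (iii). For $z=0$ the claim is simply $|A|=(1-\alpha)|\delta|\le1-\alpha$.

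Alternatively, we can avoid invoking \eqref{eq:delta}, since \eqref{eq:delta} itself was obtained from (i). In that route, (i) says that $p=(R-\alpha)/(1-\alpha)$ has positive real part, and we build a Schwarz-type function in $z$ with $x$ fixed. The key point is that $w(x,0)=0$, which here reads $p(x,0)=1$ at $z=0$; this holds because $R=1+2\operatorname{Re}(zA)$ equals $1$ at $z=0$. Schwarz's lemma then gives $|\delta|\le1$.

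The main obstacle is this step: justifying $|\delta|\le1$ from (i) by Schwarz's lemma applied in $z$. It requires analyticity of $A$ in $z$ for fixed $x$, including removability at $z=x$, and the check $w(x,0)=0$. Once that is in place, the remaining steps are routine algebra. We would cite \cite[Rem.~2]{paper} for this step as stated in the paper.
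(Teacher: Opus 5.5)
Your proof is correct and takes essentially the paper's route. Both arguments rest on the representation \eqref{eq:delta} with $|\delta|\le 1$ and then reduce everything to algebra on the single number $A$ via $G_f=1+2zA$. The paper expands $|A|\le|1-\alpha+Az|$ to reach (ii) and then completes the square for (iii), whereas you obtain (iii) directly as the M\"obius image of the closed disk and close the cycle with the trivial implication (ii)$\Rightarrow$(i). Your Schwarz-lemma sketch of why (i) forces $|\delta|\le1$ (using $G_f(x,0)=1$ and removability at $z=x$) is exactly the step the paper cites from its Remark~2 rather than proving.
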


\begin{proof}
By \eqref{eq:delta} we have $A=(1-\alpha)\delta/(1-z\delta)$, which implies $\delta=A/(1-\alpha+Az)$.
Since $|\delta|\leq 1$, then
\[
  |A|^2\leq|1-\alpha+Az|^2=(1-\alpha)^2+2(1-\alpha)\operatorname{Re}\{Az\}+|A|^2|z|^2,
\]
equivalently
\begin{equation}\label{eq:key_ineq}
  \frac{|A|^2(1-|z|^2)}{1-\alpha}+\alpha
  \leq 1+2\operatorname{Re}\{Az\}.
\end{equation}
Now,
\[
 2zA=\frac{2zf'(z)}{f(z)-f(x)}-\frac{2z}{z-x} =\frac{2zf'(z)}{f(z)-f(x)}-\frac{z+x}{z-x}-1, 
\]
substituting into \eqref{eq:key_ineq}, it follows
\[
  \frac{|A|^2(1-|z|^2)}{1-\alpha}+\alpha
  \leq \operatorname{Re}\left\{\frac{2zf'(z)}{f(z)-f(x)}-\frac{z+x}{z-x}\right\}, 
\]
which proves that (i) is equivalent to (ii). Now, by \eqref{eq:key_ineq}, multiplying by $1-|z|^2$
\begin{equation*}
|A|^2(1-|z|^2)^2-2(1-\alpha)(1-|z|^2) \operatorname{Re}\{Az\}+(1-\alpha)^2|z|^2 \leq(1-\alpha)^2,  
\end{equation*}
equivalently
\begin{equation*}
\left |(1-|z|^2)A-(1-\alpha)\overline{z} \right|^2\leq (1-\alpha)^2.
\end{equation*}
\end{proof}

\begin{rem}
The inequality \eqref{eq:quant_char} expresses
this as a lower bound on the two-point kernel that is strictly larger than $\alpha$ whenever $|z|<1$ and $A\neq 0$. For $\alpha=0$, the condition \eqref{eq:quant_char} reads
\[
  (1-|z|^2)\left|\frac{f'(z)}{f(z)-f(x)}-\frac{1}{z-x}\right|^2\leq\operatorname{Re}\left\{
\frac{2zf'(z)}{f(z)-f(x)}-\frac{z+x}{z-x}
 \right\},
\]
the classical condition of Suffridge for convex functions in a self-improving form.
\end{rem}

\begin{prop}\label{prop:repz0}
Every $f \in CC_\alpha$ admits the representation
\begin{equation}\label{eq:repz0}f(z)=\frac{z}{1-(1-\alpha)\varphi(z)},
\end{equation}
where $\varphi:\D \to \D$ is analytic with $\varphi(0)=0$. In particular, if $\alpha>0$ then  $|f(z)|<1/\alpha$ for all $z\in\D$.
\end{prop}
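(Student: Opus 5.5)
The plan is to specialize the representation \eqref{eq:delta} to the base point $x=0$. With $f(0)=0$ it reads
\[
\frac{f'(z)}{f(z)}-\frac{1}{z}=\frac{(1-\alpha)\,\delta(0,z)}{1-z\,\delta(0,z)}.
\]
I will set $\omega(z):=z\,\delta(0,z)$. This is analytic in $\D$, satisfies $\omega(0)=0$ and $|\omega(z)|<1$, and turns the identity into
\[
\frac{zf'(z)}{f(z)}=1+\frac{(1-\alpha)\,\omega(z)}{1-\omega(z)}.
\]

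The key step is to produce $\varphi$ directly rather than integrating this logarithmic derivative. Since $f$ is univalent (because $CC_\alpha\subseteq C_\alpha\subseteq C$), $f(z)/z$ never vanishes, so I can define
\[
\varphi(z):=\frac{1-z/f(z)}{1-\alpha},
\]
which is analytic with $\varphi(0)=0$ because $f'(0)=1$. Then $f=z/(1-(1-\alpha)\varphi)$ holds by construction, and everything reduces to proving $|\varphi|<1$, i.e. $|1-z/f(z)|<1-\alpha$ on $\D$.

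For this I will use the two-point condition with $x\to$ boundary rather than only $x=0$. Alternatively, and more simply, I will use \eqref{eq:CCa} with $z$ fixed and let $x$ vary. Writing $g(x)=z/f(z)$ seems awkward, so instead I will use the symmetric choice: apply Theorem~\ref{prop:quant_char}(iii) at the point $z=0$. There $A(x,0)=f'(0)/(0-f(x))+1/x=1/x-1/f(x)$, and (iii) gives $|A(x,0)|\le 1-\alpha$ for all $x\in\D$. Multiplying by $|x|<1$ yields
\[
\left|1-\frac{x}{f(x)}\right|\le (1-\alpha)|x|<1-\alpha,
\]
which is exactly $|\varphi(x)|\le |x|<1$ (with the Schwarz bound coming for free). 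The main point to check is that (iii) is legitimately applied at $z=0$ with $x\neq0$; this is fine since the kernel is defined for $x\ne z$. The case $x=0$ is trivial, since $\varphi(0)=0$.

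Finally, for the boundedness I will use the estimate
\[
|f(z)|=\frac{|z|}{|1-(1-\alpha)\varphi(z)|}<\frac{1}{1-(1-\alpha)}=\frac1\alpha,
\]
which follows from $|z|<1$ and $|\varphi|<1$ when $\alpha>0$. I expect the main obstacle to be recognizing that evaluating the kernel at $z=0$, rather than at $x=0$, gives $|\varphi|<1$ immediately; the $x=0$ route only yields a subordination for $zf'/f$, which would require an extra integration argument.
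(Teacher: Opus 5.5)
Your argument is correct and is essentially the paper's proof. The paper evaluates \eqref{eq:delta} at $z=0$ to get $1/x-1/f(x)=(1-\alpha)\,\delta(x,0)$ and sets $\varphi(x)=x\,\delta(x,0)$; your use of Theorem~\ref{prop:quant_char}(iii) at $z=0$ is exactly the bound $|\delta(x,0)|\le 1$ in another form, and defining $\varphi$ first via $1-z/f(z)$ only reorders the same step.
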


\begin{proof}
Evaluate \eqref{eq:delta} at $z = 0$ we obtain
\[
  \frac{1}{x}-\frac{1}{f(x)}=(1-\alpha)\delta(x,0).
\]
Setting $\varphi(x) := x\,\delta(x,0)$ and solving for $f(x)$, it follows \eqref{eq:repz0}. The last statement is obtained by directly applying the triangle inequality.
\end{proof}

\begin{rem}\label{rem:repz0}
The boundedness given by Proposition \ref{prop:repz0} is a feature that sharply distinguishes $CC_\alpha$ from $C_\alpha$ for $\alpha > 0$.  Also, the representation is \emph{necessary} but not sufficient, namely, not every Schwarz function $\varphi$ produces a function in $CC_\alpha$. For instance, $\varphi(z) = z$ gives $f(z) = z/(1-(1-\alpha)z)$, which does \emph{not} belong to $CC_\alpha$ when $\alpha > 0$ (see Example \ref{ex:mobius} below). 
\end{rem}

\begin{example}\label{ex:mobius}
Let $T_\lambda(z) = z/(1-\lambda z)$ with $\lambda \in \C$. A direct computation gives $T_\lambda(z)-T_\lambda(x)=(z-x)/[(1-\lambda z)(1-\lambda x)]$, whence the two-point kernel collapses to the one-point kernel:
\[
  \frac{2zT_\lambda'(z)}{T_\lambda(z)-T_\lambda(x)}-\frac{z+x}{z-x}=\frac{1+\lambda z}{1-\lambda z},
\]
independently of $x$. Since the minimum of
$\operatorname{Re}\{(1+\lambda z)/(1-\lambda z)\}$ over $\D$ equals $(1-|\lambda|)/(1+|\lambda|)$, we obtain
\begin{equation*}
 T_\lambda \in CC_\alpha \iff |\lambda|\leq \frac{1-\alpha}{1+\alpha}.
\end{equation*}
The associated Schwarz function of Proposition \ref{prop:repz0} is
$\varphi(z)=\lambda z/(1-\alpha)$, so that for the extremal choice
$|\lambda|=(1-\alpha)/(1+\alpha)$, $\alpha > 0$,
gives 
\begin{equation*}
  \sup_{z \in \D}|\varphi(z)|=\frac{1}{1+\alpha}.
\end{equation*}
\end{example}

\begin{rem}\label{rem:bound_mobius}
For the extremal maps $T_\lambda$, $|\lambda|=(1-\alpha)/(1+\alpha)$, one has
$\sup_{z\in\D}|T_\lambda(z)|=1/(1-|\lambda|)=(1+\alpha)/(2\alpha)$, which is strictly smaller than the bound $1/\alpha$ of Proposition \ref{prop:repz0}. We do not know the sharp bound for $|f|$ in $CC_\alpha$; by Remark~\ref{rem:fc}, the maps $T_\lambda$ are not extremal for it.
\end{rem}

\begin{prop}\label{prop:repx0}
Let $f \in CC_\alpha$ be, then
\begin{equation}\label{eq:repx0}
\frac{zf'}{f}(z)=\frac{1-\alpha\psi(z)}{1-\psi(z)},
\end{equation}
where $\psi:\D \to \D$ is analytic with $\psi(0)= 0$.
\end{prop}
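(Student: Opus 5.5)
Following Proposition~\ref{prop:repz0}, the plan is to evaluate the representation \eqref{eq:delta} at $x=0$ instead of $z=0$. Since $f(0)=0$, the left-hand side of \eqref{eq:delta} becomes
\[
\frac{f'(z)}{f(z)}-\frac{1}{z}=\frac{(1-\alpha)\,\delta(0,z)}{1-z\,\delta(0,z)}.
\]
Multiplying by $z$ gives
\[
\frac{zf'(z)}{f(z)}=1+\frac{(1-\alpha)\,z\delta(0,z)}{1-z\delta(0,z)}.
\]
With $\psi(z):=z\,\delta(0,z)=w(0,z)$, the right-hand side equals
\[
\frac{1-\psi+(1-\alpha)\psi}{1-\psi}=\frac{1-\alpha\psi(z)}{1-\psi(z)},
\]
which is \eqref{eq:repx0}.

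It then remains to check that $\psi$ is a Schwarz function. Since $\delta$ is analytic on $\D\times\D$ with values in $\D$, the slice $z\mapsto\delta(0,z)$ is analytic with $|\delta(0,z)|<1$. Hence $\psi$ is analytic, $|\psi(z)|\le |z|<1$, and $\psi(0)=0$. This last property is also the stated normalization $w(x,0)=0$ taken at $x=0$.

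The main point needing care is the meaning of the left side of \eqref{eq:delta} on the diagonal and at the origin. At $x=0$ and $z=0$, the quotient $f'(z)/(f(z)-f(x))-1/(z-x)$ has to be read as its analytic extension. Since $f\in CC_\alpha\subseteq S^*_{(1+\alpha)/2}$, $f$ is univalent, so $f(z)\neq 0$ for $z\neq0$, and $f'/f-1/z$ is analytic at $0$. The identity therefore holds on $\D\setminus\{0\}$ and extends to $z=0$ by continuity, where both sides equal $1$.

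If one prefers not to rely on the formula \eqref{eq:delta}, a fallback is to argue directly from \eqref{eq:CCa} at $x=0$. There one gets $\operatorname{Re}\{2zf'/f-1\}\ge\alpha$, i.e.\ $\operatorname{Re}\{zf'/f\}\ge(1+\alpha)/2$. Define $\psi$ by solving $zf'/f=(1-\alpha\psi)/(1-\psi)$ for $\psi$. The condition $|\psi|<1$ then corresponds exactly to $zf'/f$ lying in the half-plane $\operatorname{Re}\,w>(1+\alpha)/2$, which is the image of $\D$ under $\zeta\mapsto(1-\alpha\zeta)/(1-\zeta)$. The required condition $\operatorname{Re}\{zf'/f\}>(1+\alpha)/2$ follows from the weak inequality by the minimum principle for harmonic functions (the value at $0$ is $1$). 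The Schwarz lemma then gives $|\psi(z)|\le|z|$.
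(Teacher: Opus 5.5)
Your proposal is correct and is essentially the paper's own proof: evaluate \eqref{eq:delta} at $x=0$, set $\psi(z)=z\,\delta(0,z)$, and rearrange. Your fallback argument from \eqref{eq:CCa} at $x=0$, using the minimum principle and the Schwarz lemma, is also valid and avoids relying on \eqref{eq:delta}. One small slip: at $z=0$ both sides of the unmultiplied identity equal $a_2=(1-\alpha)\delta(0,0)$, not $1$; it is $zf'/f$ and $(1-\alpha\psi)/(1-\psi)$ that both equal $1$ there.
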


\begin{proof}
Evaluate \eqref{eq:delta} at $x = 0$ we have
\[
  \frac{f'}{f}(z)-\frac{1}{z}=\frac{(1-\alpha) \delta(0,z)}{1-z\,\delta(0,z)}.
\]
Setting $\psi(z) := z\,\delta(0,z)$ we obtain \eqref{eq:repx0}.
\end{proof}

\begin{rem}
Equation~\eqref{eq:repx0} is the Herglotz  representation of $f \in S^*_{(1+\alpha)/2}$, consistent with
$CC_\alpha \subset S^*_{(1+\alpha)/2}$.
\end{rem}

\begin{prop}\label{prop:compat}
The functions $\varphi$ and $\psi$ in
Propositions \ref{prop:repz0} and \ref{prop:repx0} satisfy the compatibility relation
\begin{equation}\label{eq:compat}
  \frac{\varphi'(z)}{1-(1-\alpha)\varphi(z)} =\frac{\psi(z)}{z\,(1-\psi(z))}.
\end{equation}
In particular, $\psi$ determines $f$ uniquely via integration of \eqref{eq:repx0}, and $\varphi$ is then determined by \eqref{eq:compat}.
\end{prop}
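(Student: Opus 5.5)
The plan is to compute the logarithmic derivative $f'/f$ in two ways, once from each representation, and compare the results. Both representations are valid for the same $f\in CC_\alpha$: \eqref{eq:repz0} from Proposition~\ref{prop:repz0} and \eqref{eq:repx0} from Proposition~\ref{prop:repx0}. First I note that $f(z)\neq 0$ for $z\neq 0$. This follows either from $CC_\alpha\subseteq S^*_{(1+\alpha)/2}$, or directly from \eqref{eq:repz0}, because $|(1-\alpha)\varphi(z)|<1$ makes the denominator $1-(1-\alpha)\varphi$ zero-free on $\D$. Hence $f'/f$ is meromorphic on $\D$ with only a simple pole at the origin, and logarithmic differentiation is legitimate.

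Differentiating $\log f(z)=\log z-\log\bigl(1-(1-\alpha)\varphi(z)\bigr)$ (locally, or directly via the quotient rule) gives
\[
\frac{f'(z)}{f(z)}-\frac1z=\frac{(1-\alpha)\varphi'(z)}{1-(1-\alpha)\varphi(z)}.
\]
On the other hand, dividing \eqref{eq:repx0} by $z$ and subtracting $1/z$ gives
\[
\frac{f'(z)}{f(z)}-\frac1z=\frac1z\left(\frac{1-\alpha\psi(z)}{1-\psi(z)}-1\right)=\frac{(1-\alpha)\psi(z)}{z\,(1-\psi(z))}.
\]
Equating the two right-hand sides and cancelling the nonzero factor $1-\alpha$ (recall $\alpha<1$) yields \eqref{eq:compat}. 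The singularity at $z=0$ on the right is removable since $\psi(0)=0$. Equivalently, one may observe that both expressions equal $A(0,z)$ in \eqref{eq:A}, with the first obtained through $\delta(z,0)$ and the second through $\delta(0,z)$; this also explains why the relation is called a compatibility condition.

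For the uniqueness statement, I will integrate. Since $\psi(0)=0$, the function $g(z)=\psi(z)/\bigl(z(1-\psi(z))\bigr)$ is analytic in $\D$. Then
\[
f(z)=z\exp\Bigl((1-\alpha)\int_0^z g(t)\,dt\Bigr),
\]
where the integration constant is fixed by $f'(0)=1$, so $\psi$ determines $f$. Once $f$ is known, $\varphi=(1-z/f(z))/(1-\alpha)$ is forced by \eqref{eq:repz0}. Alternatively, $\varphi$ is the unique solution with $\varphi(0)=0$ of the separable equation \eqref{eq:compat}, namely $-\log\bigl(1-(1-\alpha)\varphi\bigr)=(1-\alpha)\int_0^z g$.

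There is no genuine obstacle in this argument. The only points needing care are the nonvanishing of $1-(1-\alpha)\varphi$ and of $f(z)/z$, which justify taking logarithms, and the removability of the singularity at $0$ on the right-hand side of \eqref{eq:compat}.
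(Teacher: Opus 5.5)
Your proposal is correct and follows essentially the same route as the paper: both compute $f'/f-1/z$ once from \eqref{eq:repz0} and once from \eqref{eq:repx0}, equate the results, and cancel $1-\alpha$. You also supply details the paper leaves implicit, namely the nonvanishing of $1-(1-\alpha)\varphi$ and the explicit integration formula behind the uniqueness claim.
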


\begin{proof}
Using \eqref{eq:repz0} to differentiate $\log(f(z)/z)$, we obtain $(1-\alpha) \varphi'(z) /(1-(1-\alpha)\varphi(z))$. Also, using \eqref{eq:repx0} we have $f'(z)/f(z)-1/z=(1-\alpha) \psi(z)/(z(1-\psi(z))$.
\end{proof}

\begin{theo}\label{theo:oddpart}
Let $f\in CC_\alpha$, $\alpha \in [0,1)$, and let
\begin{equation}\label{eq:oddpart}
  F(z) \;:=\; \frac{f(z)-f(-z)}{2}
\end{equation}
be the odd part of $f$. Then
\begin{equation*}
  \operatorname{Re}\left\{\frac{zF'}{F}(z)\right\}\geq \alpha+\frac{1}{2}\frac{1-|z|^2}{1-\alpha}\left(|A(z,-z)|^2+|A(-z,z)|^2\right), 
\end{equation*}
for all $z\in \D$, where $A$ is defined by \eqref{eq:A}. In particular, $F\in S^*_\alpha$.
\end{theo}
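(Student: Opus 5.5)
Our plan is to apply the quantitative characterization (ii) of Theorem~\ref{prop:quant_char} twice, at the two antipodal pairs $(x,z)=(-z,z)$ and $(x,z)=(z,-z)$, and then average the two inequalities.

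First, compute $zF'(z)/F(z)$ directly. Since $F'(z)=\tfrac12\bigl(f'(z)+f'(-z)\bigr)$ and $F(z)=\tfrac12\bigl(f(z)-f(-z)\bigr)$, we get
\[
\frac{zF'(z)}{F(z)}=\frac12\left(\frac{2zf'(z)}{f(z)-f(-z)}+\frac{2(-z)f'(-z)}{f(-z)-f(z)}\right).
\]
The first term is the two-point kernel $\frac{2zf'(z)}{f(z)-f(x)}$ evaluated at $x=-z$. The second is the same kernel with the point $z$ replaced by $-z$ and $x=z$.

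Next, observe that the Möbius correction terms vanish at antipodal pairs. For the pair $(x,z)=(-z,z)$ we have $\frac{z+x}{z-x}=0$. For the pair $(x,z)=(z,-z)$, that is, point $-z$ and $x=z$, we have $\frac{-z+z}{-z-z}=0$ as well. Hence
\[
\frac{zF'}{F}(z)=\frac12\bigl(K(-z,z)+K(z,-z)\bigr),
\qquad
K(x,w):=\frac{2wf'(w)}{f(w)-f(x)}-\frac{w+x}{w-x}.
\]

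Now apply \eqref{eq:quant_char} to each term. At the point $w=\pm z$ we have $1-|w|^2=1-|z|^2$, so
\[
\operatorname{Re}K(-z,z)\ge \alpha+\frac{(1-|z|^2)|A(-z,z)|^2}{1-\alpha},\qquad
\operatorname{Re}K(z,-z)\ge \alpha+\frac{(1-|z|^2)|A(z,-z)|^2}{1-\alpha}.
\]
Averaging these two inequalities gives exactly the stated bound. In particular $\operatorname{Re}\{zF'/F\}\ge\alpha$. Since $F(z)=z+\dots$ is normalized, this shows $F\in S^*_\alpha$.

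The only delicate point is $z=0$ and the removable singularities. At $z=0$ the antipodal pair degenerates to $x=z=0$. There $A(0,0)$ is the continuous extension (the diagonal value $f''(0)/(2f'(0))$), and $zF'/F\to1\ge\alpha$, so the inequality holds by continuity. One also needs $F(z)\neq0$ for $z\neq0$. This follows from the univalence of $f$ ($CC_\alpha\subseteq C$), since $f(z)\neq f(-z)$ whenever $z\neq -z$. We expect no serious obstacle beyond careful bookkeeping of which variable plays the role of $z$ in $A$.
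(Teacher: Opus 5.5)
Your proof is correct and is essentially the paper's argument. The paper adds \eqref{eq:quant_char} at $(x,z)$ and at $(z,x)$ for arbitrary $x$ to obtain a symmetric kernel inequality and only then sets $x=-z$, whereas you specialize to the antipodal pair before adding. One small remark: at $z=0$ what you need is $1\geq\alpha+|a_2|^2/(1-\alpha)$, not just $1\geq\alpha$. This is exactly \eqref{eq:quant_char} at the diagonal point $(0,0)$, equivalently your continuity argument, so nothing is missing.
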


\begin{proof}
By \eqref{eq:quant_char}, with the roles
of $z$ and $x$ exchanged
\begin{equation}\label{prop:symkernel1}
  \operatorname{Re}\left\{\frac{2xf'(x)}{f(x)-f(z)}-\frac{x+z}{x-z}\right\} \geq \alpha +\frac{(1-|x|^2)|A(z,x)|^2}{1-\alpha}.
\end{equation}
Adding \eqref{eq:quant_char} and \eqref{prop:symkernel1}, we obtain
\begin{equation}\label{eq:symkernel}
 \operatorname{Re}\left\{
 \frac{zf'(z)-xf'(x)}{f(z)-f(x)}
  \right\}\geq\alpha+\frac{1}{2(1-\alpha)}\left((1-|x|^2)|A(z,x)|^2+(1-|z|^2)|A(x,z)|^2\right),
\end{equation}
 for all $x,z\in\D$. On the other hand, differentiating \eqref{eq:oddpart}
\[
 F'(z)=\frac{f'(z)+f'(-z)}{2}.
\]
So
\[
  \frac{zF'}{F}(z)=\frac{z\bigl(f'(z)+f'(-z)\bigr)}{f(z)-f(-z)}.
\]
Now, setting $x=-z$ in~\eqref{eq:symkernel},
it follows the result.
\end{proof}

\begin{rem}
For $\alpha=0$ this recovers a classical fact about convex functions:
if $f\in C$ then its odd part is starlike. We observe that for the Möbius maps
of Example~\ref{ex:mobius}, $T_\lambda\in CC_\alpha$ with
$\lambda=(1-\alpha)/(1+\alpha)$, the odd part
$F(z)=z/(1-\lambda^2z^2)$ satisfies
\[
  \inf_{z\in\D}\operatorname{Re}\left\{\frac{zF'}{F}(z)\right\}=\frac{1-\lambda^2}{1+\lambda^2}  =\frac{2\alpha}{1+\alpha^2}>\alpha \qquad (0<\alpha<1),
\]
so the order of starlikeness of $F$ along this family is strictly larger than $\alpha$.
\end{rem}

For what follows, we define
\begin{equation}\label{G}
 G_f(x,z):=\frac{2zf'(z)}{f(z)-f(x)}-\frac{z+x}{z-x}, \quad x,z\in \D.   
\end{equation}

\begin{prop}\label{prop:noincl}
For every $\alpha \in (0, 1/2]$, $C_\alpha \not\subseteq CC_\beta$
for any $\beta > 0$.
\end{prop}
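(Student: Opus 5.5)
The plan is to exhibit, for each $\alpha\in(0,1/2]$, a single function in $C_\alpha$ that lies in no $CC_\beta$ with $\beta>0$. The tool is Proposition~\ref{prop:repz0}: every $f\in CC_\beta$ with $\beta>0$ satisfies $|f(z)|<1/\beta$ on $\D$, so it suffices to find an \emph{unbounded} function in $C_\alpha$. The natural candidate is the extremal function of $C_\alpha$, defined by
\[
1+\frac{zf_\alpha''(z)}{f_\alpha'(z)}=\frac{1+(1-2\alpha)z}{1-z},
\qquad\text{equivalently}\qquad f_\alpha'(z)=(1-z)^{-2(1-\alpha)} .
\]
Its real part is bounded below by $\alpha$ on $\D$, since the Möbius map $z\mapsto (1+(1-2\alpha)z)/(1-z)$ sends $\D$ onto the half-plane $\operatorname{Re} w>\alpha$. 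Hence $f_\alpha\in C_\alpha$, and it is normalized because $f_\alpha(0)=0$ and $f_\alpha'(0)=1$.

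Next, integrate explicitly. For $\alpha\neq 1/2$,
\[
f_\alpha(z)=\frac{(1-z)^{2\alpha-1}-1}{1-2\alpha},
\]
and for $\alpha=1/2$, $f_{1/2}(z)=-\log(1-z)$. For $\alpha<1/2$ the exponent $2\alpha-1$ is negative, so $|f_\alpha(r)|\to\infty$ as $r\to1^-$. For $\alpha=1/2$ the logarithm also diverges as $r\to1^-$. In both cases $f_\alpha$ is unbounded on $\D$. This is exactly where the hypothesis $\alpha\le 1/2$ is used: for $\alpha>1/2$ the function $f_\alpha$ is bounded and the argument fails.

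To conclude, suppose $f_\alpha\in CC_\beta$ for some $\beta>0$. Proposition~\ref{prop:repz0} then gives $|f_\alpha|<1/\beta$ on $\D$, which contradicts unboundedness. Therefore $f_\alpha\in C_\alpha\setminus CC_\beta$ for every $\beta>0$.

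No step is a serious obstacle. The only points to check are the image of the Möbius map, to confirm membership in $C_\alpha$, and the integration. One could instead try to violate the two-point inequality \eqref{eq:CCa} directly, taking $x\to 1$ along the radius, but the boundedness route avoids any delicate estimate of $G_f$.
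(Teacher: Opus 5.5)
Your proof is correct and follows the same route as the paper: exhibit an unbounded function in $C_\alpha$, then use the bound $|f|<1/\beta$ from Proposition~\ref{prop:repz0} to exclude it from every $CC_\beta$ with $\beta>0$. The only difference is the witness: the paper takes the single function $k(z)=-\log(1-z)$ (your $f_{1/2}$) and uses the inclusion $C_{1/2}\subseteq C_\alpha$, rather than building the $\alpha$-dependent extremal function $f_\alpha$ for each $\alpha$.
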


\begin{proof}
Since $C_{1/2} \subseteq C_\alpha$ for $\alpha \in (0,1/2]$, it suffices to exhibit a function in $C_{1/2}$ outside every $CC_\beta$ with $\beta > 0$. Consider $k(z)=-\log(1-z)$. It is easy to note that $1+zk''(z)/k'(z)=1/(1-z)$, so $\operatorname{Re}\{1+zk''(z)/k'(z)\} > 1/2$
and $k \in C_{1/2}$. Now, by Proposition \ref{prop:repz0}, $k \notin CC_\beta$ for any $\beta > 0$, since $k$ is unbounded.
\end{proof}

\begin{rem}\label{rem:beta_question}
The argument in the previous proof breaks down for $\alpha\in(1/2,1)$, where every $f\in C_\alpha$ is bounded, and we do not know whether in that range $C_\alpha\subseteq CC_\beta$ for some $\beta=\beta(\alpha)>0$.
\end{rem}

\section{Pre-Schwarzian and Schwarzian of $f\in CC_\alpha$}

The main result of this section is a upper bound for the pre-Schwarzian norm of functions in $CC_\alpha$, this result improves the bound $\|P_f\|_{C_\alpha} \leq 4(1-\alpha)$, obtained by Yamashita in \cite{Y1999}, for $\alpha \leq 1-1/\sqrt{2}$.

The \emph{pre-Schwarzian derivative} and \emph{Schwarzian derivative}
of a locally univalent $f \in \mathcal{A}$ are defined by
\[
  P_f=\frac{f''}{f'}, \qquad
  S_f=\left(\frac{f''}{f'}\right)'-\frac{1}{2}\left(\frac{f''}{f'}\right)^2,
\]
with norms $\|P_f\|=\sup_{z\in\D}(1-|z|^2)|P_f(z)|$ and
$\|S_f\| = \sup_{z \in \D}(1-|z|^2)^2|S_f(z)|$.

\begin{rem}
Let $f \in CC_\alpha$ be, if we set $t=x-z$,
\begin{equation}\label{eq:taylor}
  G_f(z+t, z)=1 + zP_f(z)+\frac{z}{3}\,S_f(z)\cdot t+ O(t^2).
\end{equation}
Thus, the expressions for the pre-Schwarzian and Schwarzian derivatives appear explicitly in a series expansion for the function $G_f$.
\end{rem}

\begin{theo}\label{theo:Sf}
Let $f\in CC_\alpha$, $\alpha\in [0,1)$. Then, for all $z\in\D$,
\begin{equation}\label{eq:Sfbound}
 (1-|z|^2)\left(\left|\frac z3S_f(z)\right|+\left|P_f(z)+\frac z2P_f(z)^2+\frac{2z}3S_f(z)\right|\right)\leq 2\operatorname{Re}\left\{1-\alpha+zP_f(z)\right\}.
\end{equation}
\end{theo}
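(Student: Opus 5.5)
The plan is to read \eqref{eq:Sfbound} as a Carathéodory-type derivative estimate for the two-variable kernel $G_f$ of \eqref{G}, taken at a point of the diagonal $x=z$. The two quantities inside the absolute values should be exactly the partial derivatives $\partial_x G_f$ and $\partial_z G_f$ at $(z,z)$.

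\textbf{Step 1: analyticity of $G_f$.} Since $f\in CC_\alpha\subseteq C$ is univalent, $G_f$ extends analytically across the diagonal: the singularities at $x=z$ are removable. Hence $G_f$ is analytic on $\D\times\D$, and by \eqref{eq:CCa} it satisfies $\operatorname{Re} G_f\ge\alpha$ there.

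\textbf{Step 2: identifying the partial derivatives.} By the expansion \eqref{eq:taylor}, $G_f(z,z)=1+zP_f(z)$ and $\partial_x G_f(x,z)|_{x=z}=\tfrac z3 S_f(z)$. Differentiating the diagonal identity $G_f(z,z)=1+zP_f(z)$ in $z$ gives
\[
\partial_xG_f(z,z)+\partial_zG_f(z,z)=P_f(z)+zP_f'(z)=P_f(z)+\tfrac z2P_f(z)^2+zS_f(z),
\]
where I use $P_f'=S_f+\tfrac12P_f^2$. Therefore
\[
\partial_zG_f(z,z)=P_f(z)+\tfrac z2P_f(z)^2+\tfrac{2z}3S_f(z),
\]
which is precisely the second expression in \eqref{eq:Sfbound}.

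\textbf{Step 3: restriction to an analytic disk.} Fix $z\in\D$, let $\phi_z(\zeta)=(z+\zeta)/(1+\bar z\zeta)$, and take unimodular constants $\theta_1,\theta_2$. Define
\[
g(\zeta)=G_f\bigl(\phi_z(\theta_1\zeta),\,\phi_z(\theta_2\zeta)\bigr),\qquad \zeta\in\D.
\]
This $g$ is analytic on $\D$, satisfies $\operatorname{Re} g\ge\alpha$, and has $g(0)=1+zP_f(z)$. Since $\phi_z'(0)=1-|z|^2$, its derivative at the origin is
\[
g'(0)=(1-|z|^2)\bigl(\theta_1\,\partial_xG_f(z,z)+\theta_2\,\partial_zG_f(z,z)\bigr).
\]

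\textbf{Step 4: Carathéodory and choice of phases.} The function $(g-\alpha)/(1-\alpha)$ has positive real part, so the Carathéodory inequality gives $|g'(0)|\le 2\operatorname{Re}\{g(0)-\alpha\}$. Now choose $\theta_1,\theta_2$ so that both terms in $g'(0)$ have the same argument. Then $|g'(0)|$ becomes $(1-|z|^2)\bigl(|\partial_xG_f|+|\partial_zG_f|\bigr)$. The right-hand side is $2\operatorname{Re}\{1+zP_f(z)-\alpha\}$, which is the right-hand side of \eqref{eq:Sfbound}.

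\textbf{Main obstacle.} I expect the main difficulty to be Step 2, and specifically justifying the expansion \eqref{eq:taylor} cleanly. This means checking, via a careful Taylor expansion of $f(z+t)-f(z)$ to third order, that the coefficient of $t$ in $G_f(z+t,z)$ is exactly $\tfrac z3S_f$. It also means confirming the orientation convention: $x$ is the first argument of $G_f$ and is the variable that is perturbed. If that coefficient turned out to be attached to the $z$-variable instead, the two expressions in Step 2 would simply swap roles, and the final inequality would be unchanged because it is symmetric in the two absolute values.
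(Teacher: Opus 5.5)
Your proposal is correct and follows the paper's proof essentially step for step. The paper uses the same ingredients in the same order: $\partial_xG_f(z,z)=\tfrac z3S_f(z)$ read off from \eqref{eq:taylor}, $\partial_zG_f(z,z)$ obtained by differentiating $G_f(z,z)=1+zP_f(z)$, restriction of $G_f$ to the analytic disk $\zeta\mapsto(\phi_z(\theta_1\zeta),\phi_z(\theta_2\zeta))$, the Carath\'eodory inequality for $(g-\alpha)/(1-\alpha)$, and alignment of the phases. The expansion you flag as the main obstacle does check out: expanding $f(z+t)-f(z)$ to third order gives the coefficient of $t$ as $2z(\tfrac{f'''}{6f'}-\tfrac{(f'')^2}{4(f')^2})=\tfrac z3S_f$, and your symmetry remark correctly shows that the orientation convention does not matter.
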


\begin{proof}
Write
\[
  a(z):=\frac z3\,S_f(z),\qquad
  b(z):=P_f(z)+\frac z2\,P_f(z)^2+\frac{2z}{3}\,S_f(z),
\]
and $R(z):=\operatorname{Re}\{1-\alpha+zP_f(z)\}$.

\medskip
\noindent\emph{Step 1: the partial derivatives of $G_f$ on the diagonal.}
The singularity of $G_f(x,z)$ at $x=z$ is removable, so $G_f$ is analytic in $\D\times\D$, and $G_f(z,z)=1+zP_f(z)$. By \eqref{eq:taylor}, with $x=z+t$, the coefficient of $t$ is the derivative of $G_f$ with respect to its first variable on the diagonal, that is, $\partial_xG_f(z,z)=a(z)$. On the other hand, differentiating the identity $G_f(z,z)=1+zP_f(z)$ along the diagonal yields $\partial_xG_f(z,z)+\partial_zG_f(z,z)=(1+zP_f)'(z)$. Using $P_f'=S_f+\tfrac12P_f^2$,
\[
  \partial_zG_f(z,z)=P_f(z)+zP_f'(z)-\frac z3S_f(z)=P_f(z)+\frac z2P_f^2(z)+\frac{2z}3S_f(z)=b(z).
\]

\medskip
\noindent\emph{Step 2: a Carath\'eodory argument on the bidisk.}
Fix $z_0\in\D$ and $\mu_1,\mu_2\in\C$ with $|\mu_1|=|\mu_2|=1$, and let $\gamma(w)=(z_0+w)/(1+\bar z_0w)$. The map $t\mapsto(\gamma(\mu_1t),\gamma(\mu_2t))$ sends $\D$ into $\D\times\D$, hence
\[
  q(t)=\frac{G_f\bigl(\gamma(\mu_1t),\gamma(\mu_2t)\bigr)-\alpha}{1-\alpha}
\]
is analytic in $\D$ and $\operatorname{Re}q\ge0$ by \eqref{eq:CCa} (the points on the diagonal are covered by the case $x=z$). Since $\gamma(0)=z_0$ and $\gamma'(0)=1-|z_0|^2$,
\[
  q(0)=\frac{1-\alpha+z_0P_f(z_0)}{1-\alpha},\qquad
  q'(0)=\frac{(1-|z_0|^2)\bigl(\mu_1a(z_0)+\mu_2b(z_0)\bigr)}{1-\alpha}.
\]
The Carath\'eodory inequality $|q'(0)|\le2\operatorname{Re}q(0)$ gives
\[
  (1-|z_0|^2)\,\bigl|\mu_1a(z_0)+\mu_2b(z_0)\bigr|\le2R(z_0).
\]

\medskip
\noindent\emph{Step 3: choice of the rotations.}
Choosing $\mu_1,\mu_2$ so that $\mu_1a(z_0)$ and $\mu_2b(z_0)$ have the same argument, $|\mu_1a(z_0)+\mu_2b(z_0)|=|a(z_0)|+|b(z_0)|$, and therefore
\[
  (1-|z_0|^2)\bigl(|a(z_0)|+|b(z_0)|\bigr)\le2\operatorname{Re}\{1-\alpha+z_0P_f(z_0)\}.
\]
Since $z_0\in\D$ is arbitrary, the proof is complete.
\end{proof}

\begin{rem}
The inequality \eqref{eq:Sfbound} contains the following estimates as particular cases:
\begin{enumerate}
    \item [(i)] \[
  \frac{|z|(1-|z|^2)}{6}\,|S_f(z)|\le\operatorname{Re}\{1+zP_f(z)\}-\alpha
\]
\item [(ii)]
\[
  \Bigl|P_f(z)+\frac z2P_f(z)^2+\frac{2z}3S_f(z)\Bigr|(1-|z|^2)\le2\operatorname{Re}\{1-\alpha+zP_f(z)\}.
\]
\end{enumerate}
Moreover, since $a(z)+b(z)=(1+zP_f)'(z)$, the triangle inequality gives
\[
  |(1+zP_f)'(z)|\,(1-|z|^2)\le2\operatorname{Re}\{1-\alpha+zP_f(z)\},
\]
which is the one-point inequality of $C_\alpha\supset CC_\alpha$. The point of the joint statement is that it combines the information of both partial derivatives of $G_f$, rather than that of each one separately. As an estimate for $\|S_f\|$ it does not improve the bounds of Nehari and Suita, and we do not claim that it does.
\end{rem}

\begin{lemma}\label{theo:Pf}
Let $f \in CC_\alpha$, $\alpha \in [0,1)$, and associated Schwarz functions $\psi$ and $\varphi$, then
\begin{equation*}
  \frac{f''}{f'}(z)=\frac{\psi'(z)}{1-\psi(z)}-\frac{\alpha \psi'(z)}{1-\alpha \psi(z)}  +\frac{(1-\alpha)\varphi'(z)}{1-(1-\alpha) \varphi(z)},
\end{equation*}
for all $z\in \D$.
\end{lemma}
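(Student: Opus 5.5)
The identity follows by logarithmic differentiation of $f'$, written as a product using the two representations \eqref{eq:repz0} and \eqref{eq:repx0}. Since $f$ is locally univalent and $f(z)\neq0$ for $z\neq0$, we may write
\[
f'(z)=\frac{f(z)}{z}\cdot\frac{zf'(z)}{f(z)},
\]
where both factors are analytic and zero-free in $\D$. Indeed, $f(z)/z=1/(1-(1-\alpha)\varphi(z))$ and $zf'/f=(1-\alpha\psi)/(1-\psi)$ with $|\varphi|,|\psi|<1$. Hence
\[
\frac{f''}{f'}=\left(\log\frac{f(z)}{z}\right)'+\left(\log\frac{zf'(z)}{f(z)}\right)'.
\]

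For the first term we use Proposition~\ref{prop:repz0}. From $\log(f(z)/z)=-\log(1-(1-\alpha)\varphi(z))$ we get $(1-\alpha)\varphi'/(1-(1-\alpha)\varphi)$, which is exactly the computation already carried out in the proof of Proposition~\ref{prop:compat}. For the second term we use Proposition~\ref{prop:repx0}. Writing $\log(zf'/f)=\log(1-\alpha\psi)-\log(1-\psi)$ and differentiating gives
\[
-\frac{\alpha\psi'}{1-\alpha\psi}+\frac{\psi'}{1-\psi}.
\]
Summing the two contributions yields the stated formula.

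The only point requiring care, and the closest thing to an obstacle, is justifying that these logarithms are well defined, i.e. that the factors never vanish. This holds because $1-(1-\alpha)\varphi$, $1-\alpha\psi$ and $1-\psi$ are all nonzero on $\D$: each of $\varphi$, $\psi$, $(1-\alpha)\varphi$ and $\alpha\psi$ has modulus less than $1$. Alternatively, one can avoid logarithms entirely by computing $f''/f'=(f'/f)-1/z+(zf'/f)'/(zf'/f)$ directly, which is a purely algebraic identity valid wherever the quotients make sense. In that route, $f'/f-1/z$ is replaced by the left side of \eqref{eq:compat} times $(1-\alpha)$, so that the compatibility relation \eqref{eq:compat} even allows $\varphi$ to be traded for $\psi$ if desired.
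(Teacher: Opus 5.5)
Your proof is correct and is essentially the paper's argument. The paper also differentiates $\log(zf'/f)$ via \eqref{eq:repx0}, then identifies $f'/f-1/z$ (the logarithmic derivative of $f(z)/z$) with $(1-\alpha)\varphi'/(1-(1-\alpha)\varphi)$ through the compatibility relation \eqref{eq:compat}; that relation is itself proved by the same differentiation of \eqref{eq:repz0} that you carry out directly.
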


\begin{proof}
From \eqref{eq:repx0}, differentiating $\log(zf'(z)/f(z))$, we have
\[
  \frac{1}{z}+\frac{f''}{f'}(z)-\frac{f'}{f}(z)  =-\frac{\alpha \psi'(z)}{1-\alpha \psi(z)}+\frac{\psi'(z)}{1-\psi(z)},
\]
for all $z\in \D$.
Now, by \eqref{eq:repx0} and \eqref{eq:compat} 
\[
\frac{f'}{f}(z)-\frac{1}{z}=\frac{(1-\alpha)\psi(z)}{z(1-\psi(z))}=\frac{(1-\alpha)\varphi'(z)}{1-(1-\alpha)\varphi(z)}.
\]
\end{proof}

\begin{theo}\label{theo:Pf_norm}
Let $f \in CC_\alpha$, $\alpha \in [0,1)$. Then
\begin{equation*}
 \|P_f\|_{CC_\alpha}\leq 2+\frac{2(1-\alpha)}{1+\sqrt{1-(1-\alpha)^2}}.
\end{equation*}
The term on the right-hand side is less than $4(1-\alpha)$ for all $\alpha \leq 1-1/\sqrt{2}$.
\end{theo}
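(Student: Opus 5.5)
The plan is to start from the formula for $P_f=f''/f'$ in Lemma~\ref{theo:Pf} and bound its two natural pieces separately by the Schwarz--Pick lemma. First I combine the two $\psi$-terms:
\[
\frac{\psi'}{1-\psi}-\frac{\alpha\psi'}{1-\alpha\psi}=\frac{(1-\alpha)\psi'}{(1-\psi)(1-\alpha\psi)}.
\]
Writing $c=1-\alpha\in(0,1]$, this gives
\[
P_f=\frac{c\,\psi'}{(1-\psi)(1-\alpha\psi)}+\frac{c\,\varphi'}{1-c\varphi}.
\]
Then $(1-|z|^2)|P_f(z)|$ is at most the sum of the two weighted moduli. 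The goal is to show that the first is at most $2$ and the second at most $2c/(1+\sqrt{1-c^2})$.

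\textbf{The $\psi$-term.} By Schwarz--Pick, $(1-|z|^2)|\psi'(z)|\le 1-s^2$ with $s=|\psi(z)|<1$. Together with $|1-\psi|\ge 1-s$ and $|1-\alpha\psi|\ge 1-\alpha s$, this bounds the first term by
\[
\frac{(1-\alpha)(1+s)}{1-\alpha s}.
\]
This is at most $2$, because $(1-\alpha)(1+s)\le 2(1-\alpha s)$ reduces to $s(1+\alpha)\le 1+\alpha$.

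\textbf{The $\varphi$-term.} In the same way, with $r=|\varphi(z)|$, the second term is at most $c\,h(r)$, where
\[
h(r)=\frac{1-r^2}{1-cr},\qquad r\in[0,1).
\]
I then maximize $h$ on $[0,1)$. Setting $h'(r)=0$ gives $cr^2-2r+c=0$, so the critical point is $r_0=(1-\sqrt{1-c^2})/c\in[0,1)$. Substituting, and using $cr_0^2=2r_0-c$, the maximum value simplifies to $2/(1+\sqrt{1-c^2})$; as a check, this tends to $1$ as $c\to0$. This yields
\[
\|P_f\|\le 2+\frac{2(1-\alpha)}{1+\sqrt{1-(1-\alpha)^2}}.
\]
The step needing the most care is this optimization and its algebraic simplification. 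Everything else is a direct application of Schwarz--Pick together with the elementary estimate $|1-u|\ge 1-|u|$.

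\textbf{Comparison with $4(1-\alpha)$.} I rewrite the bound using
\[
\frac{2c}{1+\sqrt{1-c^2}}=\frac{2(1-\sqrt{1-c^2})}{c},
\]
and consider
\[
g(c)=4c-2-\frac{2(1-\sqrt{1-c^2})}{c}.
\]
At $c=1/\sqrt2$ we have $2c/(1+\sqrt{1-c^2})=\sqrt2/(1+1/\sqrt2)=2(\sqrt2-1)$, so $g(1/\sqrt2)=2\sqrt2-2-2(\sqrt2-1)=0$. On $[1/\sqrt2,1]$ I will show $g'>0$. The derivative of $2(1-\sqrt{1-c^2})/c$ is $2\bigl(1/\sqrt{1-c^2}-1\bigr)$ divided by $c^2$ after simplification, and this is compared against $4$. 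If this monotonicity check proves messy, I will instead square and rearrange the inequality $2+2c/(1+\sqrt{1-c^2})\le 4c$ directly to get an explicit polynomial condition in $c$. Either way, the bound is at most $4(1-\alpha)$ exactly when $c\ge 1/\sqrt2$, i.e.\ $\alpha\le 1-1/\sqrt2$ (with equality at the endpoint).
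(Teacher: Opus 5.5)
Your derivation of the norm bound is the paper's proof. It uses the same splitting of $P_f$ from Lemma~\ref{theo:Pf}, Schwarz--Pick on each piece, the estimate $(1-\alpha)(1+s)/(1-\alpha s)\le 2$, and the maximization of $(1-r^2)/(1-cr)$ at $r_0=(1-\sqrt{1-c^2})/c$. For $\alpha=0$ one has $r_0=1$, so the value $2$ is then only a supremum; this is harmless.

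The comparison with $4(1-\alpha)$ is where your primary plan fails. You cannot have $g'>0$ on $[1/\sqrt2,1]$, because besides $g(1/\sqrt2)=0$ you also have $g(1)=4-2-2=0$. Indeed $g'(c)=4-\frac{2}{c^2}\bigl(\frac{1}{\sqrt{1-c^2}}-1\bigr)\to-\infty$ as $c\to1^-$, so $g$ increases and then decreases back to $0$.

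Your fallback is the right route, and it is short. Rewrite $2+2c/(1+\sqrt{1-c^2})\le 4c$ as $2(1-\sqrt{1-c^2})/c\le 4c-2$ and multiply by $c/2$. This gives $(1-c)(1+2c)\le\sqrt{(1-c)(1+c)}$, with both sides nonnegative. For $c<1$, divide by $\sqrt{1-c}$ and square to get $(1-c)(1+2c)^2\le 1+c$. This is $2c(1-2c^2)\le 0$, i.e.\ $c\ge 1/\sqrt2$.

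The same computation shows equality at both $\alpha=1-1/\sqrt2$ and $\alpha=0$, where the bound is Yamashita's $4$. So the inequality is strict only for $0<\alpha<1-1/\sqrt2$, and your ``at most'' is the accurate form of the paper's ``less than''.
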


\begin{proof}
From Lemma \ref{theo:Pf} and the Schwarz-Pick lemma
\begin{align}\label{preS}
    (1-|z|^2)\left|\frac{f''}{f'}(z)\right|&
  \leq  \frac{(1-\alpha)(1-|z|^2)|\psi'(z)|}{|1-\psi(z)||1-\alpha \psi(z)|}+\frac{(1-\alpha)(1-|z|^2)|\varphi'(z)|}{|1-(1-\alpha)\varphi(z)|} \nonumber
  \\
 & \leq \frac{(1-\alpha)(1-|\psi(z)|^2)}{|1-\psi(z)||1-\alpha \psi(z)|}+\frac{(1-\alpha)(1-|\varphi(z)|^2)}{|1-(1-\alpha) \varphi(z)|}. 
\end{align}
 We observe that
 \begin{equation}\label{preS1}
  \frac{1-|\psi(z)|^2}{|1-\psi(z)||1-\alpha \psi(z)|}\leq  \frac{1+|\psi(z)|}{1-\alpha|\psi(z)|} \leq \frac{2}{1-\alpha}.
 \end{equation}
 On the other hand, the function
\[
  H(s)=\frac{1-s^2}{1-(1-\alpha)s}, \quad 0\leq s<1,
\]
achieved the maximum value at the point
$s_0 = (1-\sqrt{1-(1-\alpha)^2})/(1-\alpha)$, giving
$H(s_0) = 2/(1+\sqrt{1-(1-\alpha)^2})$. Therefore
\begin{equation}\label{preS2}
 \frac{1-|\varphi(z)|^2}{|1-(1-\alpha) \varphi(z)|} \leq \frac{2}{1+\sqrt{1-(1-\alpha)^2}}.  
\end{equation}
Now, with \eqref{preS1} and \eqref{preS2} in \eqref{preS}, we obtain
\[
  (1-|z|^2)\left|\frac{f''}{f'}(z)\right|
  \leq 2 + \frac{2(1-\alpha)}{1+\sqrt{1-(1-\alpha)^2}}.
\]
The last statement follows from a direct calculation.
\end{proof}

\begin{rem}
For $\alpha = 0$, we recover the Yamashita bound $\|P_f\|_C \leq 4$ for convex functions. For $\alpha \leq 1-1/\sqrt{2}$, the bound is less than $4(1-\alpha)$, confirming that the two-point condition provides an improvement.
\end{rem}

\section{Coefficient bounds}
In this section, we study the coefficients of the functions in the class $CC_\alpha$, seeking to better understand the growth of the pre-Schwarzian and Schwarzian derivatives of $f$.

Inequality \eqref{eq:Sfbound} already contains the first coefficient bound. Let $f(z)=z+a_2z^2+\cdots\in CC_\alpha$. At $z=0$ every term containing the factor $z$ vanishes and $P_f(0)=f''(0)=2a_2$, so \eqref{eq:Sfbound} reads 
\begin{equation}\label{eq:a2_first}
  |a_2|\leq1-\alpha .
\end{equation}
This estimate is improved in Corollary~\ref{coro:a2} below.

We now obtain a family of coefficient inequalities parameterized by $\lambda \in \overline\D$.

\begin{theo}\label{theo:coeff}
Let $f(z)=z+a_2 z^2+a_3 z^3 + \cdots \in CC_\alpha$. Then, for every $\lambda \in \overline\D$,
\begin{equation}\label{eq:coeff_family}
  \bigl|2(1-\alpha)a_3-(2-\alpha)a_2^2+\lambda(1-\alpha)(a_3-a_2^2)\bigr|\leq (1-\alpha)^2-|a_2|^2.
\end{equation}
\end{theo}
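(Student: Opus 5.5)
The plan is to work directly with the two-variable function $\delta$ of \eqref{eq:delta}, expand it at $(0,0)$, and apply the Schwarz--Pick lemma along complex lines through the origin of the bidisk, exactly as in Step~2 of the proof of Theorem~\ref{theo:Sf}. By the proof of Theorem~\ref{prop:quant_char} we have $\delta=A/(1-\alpha+Az)$, with $A$ as in \eqref{eq:A}, and $\delta:\D\times\D\to\overline\D$ is analytic. Write $\delta(x,z)=c_0+c_1x+c_2z+O(|x|^2+|z|^2)$.

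\emph{Step 1: Taylor coefficients of $A$ at $(0,0)$.} I would first expand $A$ and then read off $c_0,c_1,c_2$. Since
\[
\frac{f(z)-f(x)}{z-x}=1+a_2(z+x)+a_3(z^2+zx+x^2)+\cdots,
\]
we have $A=\partial_z\log\frac{f(z)-f(x)}{z-x}$. A short computation gives
\[
A(x,z)=a_2+(2a_3-a_2^2)z+(a_3-a_2^2)x+\cdots .
\]

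\emph{Step 2: Taylor coefficients of $\delta$.} Differentiating $\delta=A/(1-\alpha+Az)$ at the origin yields
\[
c_0=\frac{a_2}{1-\alpha},\qquad c_1=\frac{a_3-a_2^2}{1-\alpha},
\]
and
\[
c_2=\frac{2a_3-a_2^2}{1-\alpha}-\frac{a_2^2}{(1-\alpha)^2}=\frac{2(1-\alpha)a_3-(2-\alpha)a_2^2}{(1-\alpha)^2}.
\]

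\emph{Step 3: Schwarz--Pick on the bidisk.} Fix unimodular $\mu_1,\mu_2$ and consider $h(t)=\delta(\mu_1t,\mu_2t)$, which is analytic from $\D$ to $\overline\D$.
\begin{itemize}
\item If $|c_0|=1$, then $h$ is constant by the maximum principle. Then $|a_2|=1-\alpha$, and we need $c_1=c_2=0$; this follows by running the argument for every pair $(\mu_1,\mu_2)$ (or from the maximum principle on $\delta$ itself). Both sides of \eqref{eq:coeff_family} then vanish.
\item Otherwise $h$ maps $\D$ into $\D$, and Schwarz--Pick gives $|h'(0)|\le 1-|h(0)|^2$, that is,
\[
|\mu_1c_1+\mu_2c_2|\le 1-|c_0|^2 .
\]
\end{itemize}
Multiplying by $(1-\alpha)^2$ and setting $\lambda=\mu_1/\mu_2$ (note $|\mu_2|=1$), this is precisely \eqref{eq:coeff_family} for $|\lambda|=1$.

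\emph{Step 4: extension to $|\lambda|<1$.} For fixed coefficients, the left side of \eqref{eq:coeff_family} is the modulus of an affine function of $\lambda$. It is therefore subharmonic, or simply convex, in $\lambda$, so the maximum principle extends the bound from $|\lambda|=1$ to all of $\overline\D$. Equivalently, we can use $|c_2|+|c_1|\le 1-|c_0|^2$, which Step~3 gives after choosing the rotations so that the two terms align.

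\emph{Main obstacle.} The only delicate points are the bookkeeping in computing $c_2$ and the degenerate case $|c_0|=1$. I expect the coefficient computation of Steps~1--2 to be where errors creep in, so I would double-check it against the M\"obius maps $T_\lambda$ of Example~\ref{ex:mobius}. For these, $a_3=a_2^2$, and $\delta(x,z)=\lambda/(1-\alpha)$ has $c_1=c_2=0$, which is consistent with $2(1-\alpha)a_3-(2-\alpha)a_2^2=-\alpha a_2^2$.
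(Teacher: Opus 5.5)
Your argument is correct and is essentially the paper's proof: you expand $\delta=A/(1-\alpha+Az)$ to first order at the origin (your $c_0,c_1,c_2$ agree with the paper's) and apply Schwarz--Pick to the restriction of $\delta$ to a complex line through the origin of the bidisk; the paper simply takes $x=\lambda z$ with $|\lambda|\le1$ directly, which makes your Step~4 unnecessary. One slip in your sanity check: for $T_\lambda$ one has $A=\lambda/(1-\lambda z)$, hence $\delta(x,z)=\lambda/(1-\alpha+\alpha\lambda z)$, which is not constant when $\alpha>0$ (only $\delta(x,0)=\lambda/(1-\alpha)$ is constant), so $c_1=0$ but $c_2=-\alpha\lambda^2/(1-\alpha)^2$, exactly as your formula for $c_2$ predicts.
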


\begin{proof}
Let $G_f(x,z)$ be defined by \eqref{G}. A straightforward calculation shows
\begin{equation}\label{G1}
G_f(x,z)=1+2a_2z+2(2a_3-a_2^2)z^2+2(a_3-a_2^2) xz+O(x^3,z^3).
\end{equation}
On the other hand, writing $G_f(x,z)=(1+(1-2 \alpha)w(x,z)/(1-w(x,z))$ with $w(x,z)=z\delta(x,z)$ and expanding $w(x,z)$ in power series and comparing the coefficients with \eqref{G1}, we obtain 
\begin{equation*}
 \delta(x,z)=\frac{a_2}{1-\alpha}+\frac{a_3-a_2^2}{1-\alpha}x+\left(\frac{2a_3-a_2^2}{1-\alpha}-\frac{a_2^2}{(1-\alpha)^2}\right)z-\frac{2a_2(a_3-a_2^2)}{(1-\alpha)^2}xz+ O(x^2,z^2).   
\end{equation*}
Setting $x=\lambda z$, we have
 \[
  \delta(\lambda z,z)=\frac{a_2}{1-\alpha}
  +\left[\frac{2a_3-a_2^2}{1-\alpha}
  -\frac{a_2^2}{(1-\alpha)^2}
  +\lambda\frac{a_3-a_2^2}{1-\alpha}\right] z+ O(z^2).
\]
Since $z \mapsto \delta(\lambda z,z)$ maps $\D$ to $\D$ with value $a_2/(1-\alpha)$ at $z=0$, the Schwarz-Pick lemma gives 
\begin{equation*}
\left|\frac{2a_3-a_2^2}{1-\alpha}-\frac{a_2^2}{(1-\alpha)^2}+\lambda\frac{a_3-a_2^2}{1-\alpha} \right|\leq 1-\left|\frac{a_2}{1-\alpha}\right|^2.
\end{equation*}
Multiplying by $(1-\alpha)^2$ yields \eqref{eq:coeff_family}.
\end{proof}

\begin{coro}\label{coro:a2}
Let $f(z)=z+a_2z^2+\cdots\in CC_\alpha$, $\alpha\in[0,1)$. Then
\begin{equation}\label{eq:a2_improved}
  |a_2|\leq(1-\alpha)\sqrt{\frac{2}{2+\alpha}}.
\end{equation}
In particular, $|a_2|<1-\alpha$ for every $\alpha\in(0,1)$.
\end{coro}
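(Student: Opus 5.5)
The plan is to derive \eqref{eq:a2_improved} directly from the family of inequalities \eqref{eq:coeff_family} in Theorem~\ref{theo:coeff}. The idea is to eliminate $a_3$ by combining two of these inequalities, leaving an inequality in $|a_2|$ alone. Write
\[
B:=2(1-\alpha)a_3-(2-\alpha)a_2^2,\qquad C:=(1-\alpha)(a_3-a_2^2),
\]
so that \eqref{eq:coeff_family} reads $|B+\lambda C|\le (1-\alpha)^2-|a_2|^2$ for every $\lambda\in\overline\D$.

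The first step is to optimize over $\lambda$. Choose $\lambda$ unimodular so that $\lambda C$ has the same argument as $B$; if either $B$ or $C$ vanishes, any $\lambda$ works. This gives
\[
|B|+|C|\le (1-\alpha)^2-|a_2|^2 .
\]
The second step is to note that $a_3$ cancels in a suitable combination:
\[
B-2C=-(2-\alpha)a_2^2+2(1-\alpha)a_2^2=-\alpha a_2^2 .
\]

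The third step combines these two facts:
\[
\alpha|a_2|^2=|B-2C|\le |B|+2|C|\le 2\bigl(|B|+|C|\bigr)\le 2(1-\alpha)^2-2|a_2|^2 .
\]
Rearranging gives $(2+\alpha)|a_2|^2\le 2(1-\alpha)^2$, which is \eqref{eq:a2_improved}. The final claim follows because $2/(2+\alpha)<1$ for $\alpha>0$.

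There is no real obstacle here. The only point requiring a choice is the combination that cancels $a_3$, namely $B-2C$. The admissible parameter range $|\lambda|\le1$ does not allow taking $\lambda=-2$ directly, which is why the crude estimate $|B|+2|C|\le 2(|B|+|C|)$ is used. That estimate loses something, so the constant is probably not sharp. Whether a better weighting, with $\lambda$ depending on the arguments of $B$ and $C$, improves it is not needed for the statement.
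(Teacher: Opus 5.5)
Your proof is correct and is exactly the paper's argument: the same unimodular choice of $\lambda$ giving $|B|+|C|\le(1-\alpha)^2-|a_2|^2$, the same $a_3$-free combination $B-2C=-\alpha a_2^2$, and the same estimate $|B|+2|C|\le 2(|B|+|C|)$. One correction to your closing speculation: no choice or reweighting of $\lambda$ can do better. When $B=0$, the family \eqref{eq:coeff_family} reduces to $\alpha|a_2|^2/2=|C|\le(1-\alpha)^2-|a_2|^2$, which is precisely \eqref{eq:a2_improved}, so that step loses nothing relative to \eqref{eq:coeff_family} (the paper remarks this after the corollary).
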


\begin{proof}
Put
\[
  X=2(1-\alpha)a_3-(2-\alpha)a_2^2,\qquad Y=(1-\alpha)(a_3-a_2^2),
\]
so that \eqref{eq:coeff_family} reads $|X+\lambda Y|\leq M$ for all $\lambda\in\overline\D$, where $M:=(1-\alpha)^2-|a_2|^2$. Choosing $|\lambda|=1$ with $\arg(\lambda Y)=\arg X$ gives $|X|+|Y|\leq M$. On the other hand, the terms in $a_3$ cancel in
\[
  X-2Y=-(2-\alpha)a_2^2+2(1-\alpha)a_2^2=-\alpha a_2^2 .
\]
Hence
\[
  \alpha|a_2|^2=|X-2Y|\leq|X|+2|Y|\leq2\bigl(|X|+|Y|\bigr)\leq2\bigl((1-\alpha)^2-|a_2|^2\bigr),
\]
that is, $(2+\alpha)|a_2|^2\leq2(1-\alpha)^2$, which is \eqref{eq:a2_improved}.
\end{proof}

\begin{rem}
For $\alpha=0$ the bound \eqref{eq:a2_improved} reduces to $|a_2|\leq1$, the classical bound for convex functions, and for $\alpha>0$ it improves \eqref{eq:a2_first}. We do not know whether \eqref{eq:a2_improved} is sharp. By Example~\ref{ex:mobius} and Proposition~\ref{prop:fc} below, the sharp constant $A(\alpha)=\sup\{|a_2|: f\in CC_\alpha\}$ satisfies, for $\frac12<\alpha<1$,
\[
  \max\left\{\frac{1-\alpha}{1+\alpha},\ \frac{\alpha(1-\alpha)}{2\bigl(\alpha^2+(1-\alpha)^2\bigr)}\right\}\leq A(\alpha)\leq(1-\alpha)\sqrt{\frac{2}{2+\alpha}};
\]
for instance, $0.15\leq A(3/4)\leq0.2132\ldots$. If $X=0$, i.e. $a_3=\tfrac{2-\alpha}{2(1-\alpha)}a_2^2$, then \eqref{eq:a2_improved} is the best bound on $|a_2|$ compatible with \eqref{eq:coeff_family} alone, so improving it requires information beyond \eqref{eq:coeff_family}.
\end{rem}

\begin{prop}\label{prop:fc}
For $0<c<1$ let $f_c(z)=\frac{2}{c}\bigl(1-\sqrt{1-cz}\bigr)=z+\frac c4z^2+\frac{c^2}8z^3+\cdots$ (principal branch). Then $f_c\in CC_{\alpha(c)}$, where
\[
\alpha(c)=\frac12\Bigl(1+\sqrt{\tfrac{1-c}{1+c}}\Bigr)=\inf_{x,z\in\D}\operatorname{Re}G_{f_c}(x,z),
\]
and the infimum is approached only as $(x,z)\to(1,-1)$.
\end{prop}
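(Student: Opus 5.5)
The plan is to compute $G_{f_c}$ in closed form with the substitution $u=\sqrt{1-cz}$, $v=\sqrt{1-cx}$ (principal branches), and then minimise its real part over the closed bidisk. Both substitutions map $\D$ conformally onto the same domain $\Omega=\{\sqrt{1-c\zeta}:\zeta\in\D\}$, which lies in the right half-plane near $1$. In these variables
\[
z=\frac{1-u^2}{c},\qquad x=\frac{1-v^2}{c},\qquad f_c(z)-f_c(x)=\frac{2}{c}(v-u),\qquad f_c'(z)=\frac1u .
\]
Substituting into \eqref{G} gives
\[
G_{f_c}(x,z)=\frac{(1-u^2)(u+v)-u(2-u^2-v^2)}{u(v-u)(v+u)} .
\]
The numerator factors as $(v-u)(1+uv)$, so
\[
G_{f_c}(x,z)=\frac{1+uv}{u(u+v)},\qquad u,v\in\Omega .
\]
This also shows that the singularity at $u=v$ is removable.

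Next I identify the candidate value. At the corner $z=-1$, $x=1$ we have $u=\sqrt{1+c}$, $v=\sqrt{1-c}$ and $uv=\sqrt{1-c^2}$. Put $s=\sqrt{(1-c)/(1+c)}$, so that $1/(1+c)=(1+s^2)/2$ and $\sqrt{1-c^2}/(1+c)=s$. Dividing numerator and denominator by $1+c$ gives
\[
G=\frac{(1+s)^2/2}{1+s}=\frac{1+s}{2}=\alpha(c).
\]
Hence $\inf\operatorname{Re}G_{f_c}\le\alpha(c)$, and it remains to show that $\operatorname{Re}G_{f_c}>\alpha(c)$ everywhere on $\D\times\D$. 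That inequality gives $f_c\in CC_{\alpha(c)}$ together with the value of the infimum.

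For the lower bound, $G$ is analytic in each variable on a neighbourhood of $\overline\Omega\times\overline\Omega$, since $u+v\neq0$ because both lie in the right half-plane. So $\operatorname{Re}G$ is pluriharmonic on a neighbourhood of the closed bidisk, and its minimum over the closed bidisk is attained on the distinguished boundary $|x|=|z|=1$. Moreover, the minimum is attained only there unless $\operatorname{Re}G$ is constant in one variable, which is easily excluded. This reduces the problem to showing
\[
\operatorname{Re}\frac{1+uv}{u(u+v)}\ge\frac{1+s}{2}\qquad\text{for }u,v\in\partial\Omega,
\]
with equality only at $(u,v)=(\sqrt{1+c},\sqrt{1-c})$.

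This two-parameter inequality on the torus is the main obstacle. I would first fix $u$ and write
\[
G=\frac1u\cdot\frac{1+uv}{u+v}.
\]
For fixed $u$ this is a Möbius function of $v$, so as $v$ runs over $\partial\Omega$ its image is an explicit curve, and the minimum of the real part can be located by differentiating in the angle $\theta$ of $x=e^{i\theta}$. By the symmetry $G(\bar x,\bar z)=\overline{G(x,z)}$, I expect the minimising $v$ to be the real point $\sqrt{1-c}$ whenever $u$ is near $\sqrt{1+c}$. Then I would minimise the resulting one-variable function of $z=e^{i\phi}$ and check that its minimum is at $\phi=\pi$. If the general fixed-$u$ analysis becomes unwieldy, the fallback is a direct trigonometric expansion. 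Writing $u=\sqrt{1-ce^{i\phi}}$ and $v=\sqrt{1-ce^{i\theta}}$, I would show that
\[
\operatorname{Re}G-\alpha(c)=\frac{\operatorname{Re}\bigl[(1+uv)\,\overline{u(u+v)}\bigr]-\alpha(c)\,|u|^2|u+v|^2}{|u|^2|u+v|^2}
\]
has a numerator that is a sum of nonnegative terms vanishing only at $\phi=\pi$, $\theta=0$. Uniqueness of the approach point then follows from strictness in the interior, given by the maximum principle, together with the equality analysis on the torus.
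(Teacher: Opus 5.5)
Your setup matches the paper's: the substitution $u=\sqrt{1-cz}$, $v=\sqrt{1-cx}$, the closed form $G_{f_c}(x,z)=\frac{1+uv}{u(u+v)}$, the evaluation $G_{f_c}(1,-1)=\alpha(c)$, and the reduction to $|x|=|z|=1$ by the minimum principle. One small caveat there: $G_{f_c}(x,0)\equiv1$ is constant in $x$, so constancy in one variable is not excluded everywhere. Apply the principle in $z$ first, where $G_{f_c}$ is never constant.

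The genuine gap is the step you yourself call the main obstacle: $\operatorname{Re}G_{f_c}\ge\alpha(c)$ on the torus, with equality only at $(1,-1)$. It is not proved, and it is the entire content of the proposition. Your first route rests on a false expectation. For $z=e^{i\phi}$ with $0<\phi<\pi$, however close $\phi$ is to $\pi$, the minimum of $x\mapsto\operatorname{Re}G_{f_c}(x,z)$ on the circle is \emph{not} attained at $x=1$ (see below). So minimizing $\operatorname{Re}G_{f_c}(1,e^{i\phi})$ in $\phi$ controls only one slice and gives no lower bound for the infimum. The fallback, that the numerator is a sum of nonnegative terms, is an assertion with no decomposition offered.

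The missing idea, which is how the paper closes the argument, is that $\operatorname{Re}G_{f_c}$ has an exact closed form on the torus. Write $z=e^{i\phi}$, $x=e^{i\theta}$ with $x\ne z$, and $r_z=|1-cz|$, $r_x=|1-cx|$. Then $\operatorname{Re}\frac{z+x}{z-x}=0$, $\operatorname{Re}\frac{z}{z-x}=\frac12$ and $v-u=c(z-x)/(u+v)$, hence
\[
2\operatorname{Re}G_{f_c}-1=\operatorname{Re}\Bigl[\frac vu\cdot\frac{2z}{z-x}\Bigr]=\frac{\operatorname{Im}W}{r_z\sin\beta},\qquad W=v\,e^{-i\theta/2}\,\overline{u\,e^{-i\phi/2}},\quad \beta=\tfrac{\phi-\theta}{2}.
\]
Since $|W|^2=r_zr_x$ and $W^2=(\bar x-c)(z-c)$, one finds $(\operatorname{Im}W)^2=\frac14\bigl(|z-x|^2-(r_z-r_x)^2\bigr)$. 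A sign and connectedness argument on the torus minus the diagonal then gives
\[
\operatorname{Re}G_{f_c}(x,z)=\frac12+\frac1{2r_z}\sqrt{1-\Bigl(\frac{r_z-r_x}{|z-x|}\Bigr)^2}.
\]
The two-variable problem now reduces to the elementary facts $|r_z-r_x|\le c|z-x|$ and $r_z\le 1+c$. These give $\operatorname{Re}G_{f_c}\ge\frac12+\frac{\sqrt{1-c^2}}{2(1+c)}=\alpha(c)$, with equality forcing $z=-1$ and then $x=1$.

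The same formula shows why your heuristic fails. For fixed $z$, the minimizing $x$ maximizes $(r_z-r_x)/|z-x|$. At $\theta=0$ the numerator is positive and stationary, since $r_x$ has a critical point there. Meanwhile $|z-x|=2\sin\frac{\phi-\theta}{2}$ strictly decreases as $\theta$ increases from $0$ when $0<\phi<\pi$. So the quotient is still increasing at $\theta=0$, and the minimum in $x$ lies elsewhere.
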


\begin{proof}
Let $p=\sqrt{1-cz}$ and $q=\sqrt{1-cx}$. Since $f_c'=1/p$, $f_c(z)-f_c(x)=\frac2c(q-p)$ and $c(z-x)=q^2-p^2$, a direct computation gives
\begin{equation}\label{eq:Gfc}
G_{f_c}(x,z)=\frac{1+pq}{p\,(p+q)} .
\end{equation}
As $\operatorname{Re}p,\operatorname{Re}q>0$ for $|z|,|x|<1/c$, the function $G_{f_c}$ is analytic in a neighbourhood of $\overline{\D}\times\overline{\D}$ (the diagonal is a removable singularity). Applying the minimum principle first in $z$ and then in $x$, the infimum of $\operatorname{Re}G_{f_c}$ over $\D\times\D$ equals its minimum over $\partial\D\times\partial\D$ and is not attained in $\D\times\D$.

Let $z=e^{i\theta}$, $x=e^{i\eta}$, $x\ne z$, and $r_z=|1-cz|$, $r_x=|1-cx|$. Then $(z+x)/(z-x)\in i\mathbb R$ and $q-p=c(z-x)/(p+q)$, so
\[
2\operatorname{Re}G_{f_c}-1=\operatorname{Re}\Bigl[\frac qp\cdot\frac{2z}{z-x}\Bigr]=\frac{\operatorname{Im}W}{r_z\sin\beta},
\qquad \beta=\tfrac{\theta-\eta}2,\quad W=q\,e^{-i\eta/2}\,\overline{p\,e^{-i\theta/2}} .
\]
Since $|W|^2=r_zr_x$ and $W^2=(\bar x-c)(z-c)$, the identity $(\operatorname{Im}W)^2=\frac12(|W|^2-\operatorname{Re}W^2)$ and $r_z^2=1+c^2-2c\cos\theta$ yield $(\operatorname{Im}W)^2=\frac14\bigl(|z-x|^2-(r_z-r_x)^2\bigr)$. As $2\operatorname{Re}G_{f_c}-1$ never vanishes on the connected set $(\partial\D\times\partial\D)\setminus\{x=z\}$ and is positive at $(1,-1)$, we obtain
\begin{equation}\label{eq:ReGfc}
\operatorname{Re}G_{f_c}(x,z)=\frac12+\frac1{2r_z}\sqrt{1-\Bigl(\frac{r_z-r_x}{|z-x|}\Bigr)^2}.
\end{equation}
Now $|r_z-r_x|\le c|z-x|$ and $r_z\le1+c$, hence
\[
\operatorname{Re}G_{f_c}(x,z)\ \ge\ \frac12+\frac{\sqrt{1-c^2}}{2(1+c)}=\alpha(c),
\]
with equality only for $z=-1$, $x=1$. Indeed, there $p=\sqrt{1+c}$, $q=\sqrt{1-c}$, $1+pq=\frac12(p+q)^2$, and \eqref{eq:Gfc} gives $G_{f_c}(1,-1)=\frac{p+q}{2p}=\alpha(c)$.
\end{proof}

\begin{rem}\label{rem:fc}
The map $c\mapsto\alpha(c)$ is a decreasing bijection of $(0,1)$ onto $(\frac12,1)$. Since $a_2=c/4$, one checks that $a_2>\frac{1-\alpha(c)}{1+\alpha(c)}$ if and only if $c<\frac45$; thus, for $\alpha\in(\frac23,1)$, the M\"obius maps of Example~\ref{ex:mobius} do not maximize $|a_2|$ in $CC_\alpha$. They are not extremal for $|\varphi|$ or $|f|$ either: for $c=3/5$, so that $\alpha=3/4$, the Schwarz function of Proposition~\ref{prop:repz0} is $\varphi(z)=2\bigl(1-\sqrt{1-cz}\bigr)$, and
\begin{gather*}
\sup_{\D}|\varphi|\geq\varphi(1)=2\bigl(1-\sqrt{2/5}\bigr)\approx0.735>\tfrac47=\tfrac1{1+\alpha},\\
\sup_{\D}|f_c|\geq f_c(1)=\tfrac{10}{3}\bigl(1-\sqrt{2/5}\bigr)\approx1.225>\tfrac76=\tfrac{1+\alpha}{2\alpha}.
\end{gather*}
What remains true in general is the bound $\operatorname{Re}\{f(z)/z\}>2^{\alpha-1}$, which follows from $CC_\alpha\subseteq S^*_{(1+\alpha)/2}$ since then $f(z)/z\prec(1-z)^{-(1-\alpha)}$. We do not know whether there is a constant $k(\alpha)<1$ such that $|\varphi|\leq k(\alpha)$ in $\D$ for every $f\in CC_\alpha$. Finally, $\inf_\D\operatorname{Re}\{1+zf_c''/f_c'\}=\frac{2+c}{2(1+c)}>\alpha(c)$, so that $f_c\in C_\gamma\setminus CC_\gamma$ for $\alpha(c)<\gamma\le\frac{2+c}{2(1+c)}$.
\end{rem}

\begin{prop}\label{prop:a3}
If $f(z)=z+a_2z^2+a_3z^3+\cdots \in CC_\alpha$, then
\begin{equation}\label{eq:bounda3_a2}
|a_3-a_2^2|\leq\frac{1-\alpha-|a_2|^2}{3}.
\end{equation}
In particular, using \eqref{eq:a2_improved},
\begin{equation}\label{eq:a3_bound}
  |a_3|\leq\frac{(1-\alpha)(2-\alpha)}{2+\alpha}.
\end{equation}

\end{prop}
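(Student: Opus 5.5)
The plan is to derive \eqref{eq:bounda3_a2} directly from the one-parameter family \eqref{eq:coeff_family} of Theorem~\ref{theo:coeff} with the single choice $\lambda=1$. As in the proof of Corollary~\ref{coro:a2}, write
\[
X=2(1-\alpha)a_3-(2-\alpha)a_2^2,\qquad Y=(1-\alpha)(a_3-a_2^2),\qquad M=(1-\alpha)^2-|a_2|^2 .
\]
The first step is to rewrite $X$ in terms of $Y$. Since $X=2(1-\alpha)(a_3-a_2^2)+\bigl(2(1-\alpha)-(2-\alpha)\bigr)a_2^2$, we get $X=2Y-\alpha a_2^2$. Hence, for every $\lambda\in\overline\D$,
\[
X+\lambda Y=(2+\lambda)Y-\alpha a_2^2 .
\]

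The second step is to choose $\lambda=1$, which maximizes $|2+\lambda|$ on $\overline\D$. Then \eqref{eq:coeff_family} reads $|3Y-\alpha a_2^2|\le M$. The triangle inequality gives
\[
3(1-\alpha)|a_3-a_2^2|\le M+\alpha|a_2|^2=(1-\alpha)^2-(1-\alpha)|a_2|^2 .
\]
Dividing by $3(1-\alpha)>0$ yields exactly \eqref{eq:bounda3_a2}. Note that the right-hand side has $1-\alpha$ rather than $(1-\alpha)^2$ precisely because the term $\alpha|a_2|^2$ partially cancels $-|a_2|^2$ in $M$.

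The third step obtains \eqref{eq:a3_bound}. By \eqref{eq:bounda3_a2} and the triangle inequality,
\[
|a_3|\le|a_2|^2+\frac{1-\alpha-|a_2|^2}{3}=\frac{1-\alpha}{3}+\frac{2}{3}|a_2|^2 .
\]
This is increasing in $|a_2|$, so we insert the bound $|a_2|^2\le 2(1-\alpha)^2/(2+\alpha)$ from Corollary~\ref{coro:a2}:
\[
|a_3|\le\frac{(1-\alpha)\bigl[(2+\alpha)+4(1-\alpha)\bigr]}{3(2+\alpha)}=\frac{(1-\alpha)(2-\alpha)}{2+\alpha}.
\]

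There is no real obstacle here. The only point needing care is recognizing that $\lambda=1$, and not the rotation used in Corollary~\ref{coro:a2}, is the right choice to isolate $a_3-a_2^2$. The rest is routine algebra, which I would double-check for the factor $1-\alpha$ in the final division.
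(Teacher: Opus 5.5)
Your proof is correct and is essentially the paper's argument. Taking $\lambda=1$ in Theorem~\ref{theo:coeff} is exactly the Schwarz--Pick lemma applied to the diagonal restriction $t\mapsto\delta(t,t)$, both routes reach $|3(1-\alpha)(a_3-a_2^2)-\alpha a_2^2|\le(1-\alpha)^2-|a_2|^2$ before the same triangle-inequality step, and the only difference is that the paper recomputes the first-order coefficients of $\delta$ through $\varphi$, $\psi$ and the compatibility relation \eqref{eq:compat} rather than citing the expansion in Theorem~\ref{theo:coeff}.
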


\begin{proof}
Write $\varphi(x)=x\delta(x,0)$ as in Proposition \ref{prop:repz0} and set $d_0=\delta(0,0)=\varphi'(0)$,
$d_x=\delta_x(0,0)=\tfrac12\varphi''(0)$. Differentiating
$f(z)=z/(1-(1-\alpha)\varphi(z))$ three times at $z=0$ gives
\begin{equation}\label{eq:a2a3_from_phi}
  a_2=(1-\alpha)\varphi'(0)=(1-\alpha)d_0, \qquad
  a_3=(1-\alpha)\bigl[(1-\alpha)d_0^2+d_x\bigr].
\end{equation}
On the other hand, equating the coefficients of $z$ in the series expansions of $\varphi'(z)/(1-(1-\alpha)\varphi(z))$ and $\psi(z)/(z(1-\psi(z)))$ at $z=0$, with
$d_z=\delta_z(0,0)=\psi''(0)/2$ one obtains
\begin{equation}\label{eq:compat_first_order}
  d_z \;=\; 2d_x - \alpha\, d_0^2.
\end{equation}
Since $\delta:\D\times\D\to\D$ is analytic, its restriction to the diagonal $t\mapsto\delta(t,t)$ is an analytic self-map of $\D$ with value $d_0$ at $t=0$; the Schwarz-Pick lemma applied to this restriction gives
\begin{equation}\label{eq:diag_schwarz}
  |d_x+d_z|\leq 1-|d_0|^2.
\end{equation}
Substituting \eqref{eq:compat_first_order} into \eqref{eq:diag_schwarz} we obtain $|d_x+d_z|=|3d_x-\alpha d_0^2| \leq 1-|d_0|^2$, then by \eqref{eq:a2a3_from_phi} 
\[ 
  \left|\frac{3}{1-\alpha}(a_3-a_2^2)-\frac{\alpha a_2^2}{(1-\alpha)^2}\right|\leq 1-\left|\frac{a_2}{1-\alpha}\right|^2.
\] 
 Thus, \eqref{eq:bounda3_a2} is obtained using the triangle inequality. 
\end{proof}

\begin{rem}
For $\alpha=0$, bound \eqref{eq:a3_bound} gives the classical Robertson bound for convex functions \cite{Rob1936}: $|a_3|\leq 1$ for $f\in C$, with equality for $f(z)=z/(1-z)$.  Also, $\alpha=0$ in \eqref{eq:bounda3_a2} gives Trimble's inequality for $f\in C$ (see \cite{T1975}).
\end{rem}

\section{Subordination of the difference quotient and derivative}

The main result regarding subordination that we have found for $f\in CC_{\alpha}$ is the following:

\begin{theo}\label{theo:subord}
Let $f \in CC_\alpha$, $\alpha\in [0,1)$. Then
\begin{equation}\label{eq:diffquot_subord}
  \frac{x}{f(x)}\cdot\frac{f(z)-f(x)}{z-x}
  \prec\frac{1}{(1-z)^{1-\alpha}},
\end{equation}
where the subordination is in the variable $z$ for each fixed $x\in\D$.
\end{theo}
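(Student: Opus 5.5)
The plan is to reduce \eqref{eq:diffquot_subord} to the kernel condition \eqref{eq:CCa} by a logarithmic derivative, and then to use a classical subordination lemma. Fix $x\in\D$ and put
\[
h(z):=\frac{x}{f(x)}\cdot\frac{f(z)-f(x)}{z-x},\qquad k(z):=(1-z)^{-(1-\alpha)} .
\]
For $x=0$ we read $h(z)=f(z)/z$.

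\emph{Step 1: $h$ is admissible.} Since $CC_\alpha\subseteq C_\alpha\subseteq C$, $f$ is univalent. Hence $h$ is analytic in $\D$, the singularity at $z=x$ being removable with value $\frac{x f'(x)}{f(x)}\neq0$. Univalence also gives $h(z)\neq0$ on $\D$, and $h(0)=\frac{x}{f(x)}\cdot\frac{-f(x)}{-x}=1$. So we may take the branch of $\log h$ with $\log h(0)=0$.

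\emph{Step 2: the logarithmic derivative of $h$ is the kernel.} A direct computation gives
\[
\frac{zh'(z)}{h(z)}=\frac{zf'(z)}{f(z)-f(x)}-\frac{z}{z-x}=zA(x,z),
\]
so $1+2zh'/h=G_f(x,z)$. By \eqref{eq:delta}, with $w(x,z)=z\delta(x,z)$, this reads
\[
\frac{zh'(z)}{h(z)}=\frac{(1-\alpha)\,w(x,z)}{1-w(x,z)},
\]
where $z\mapsto w(x,z)$ is a Schwarz function. The right-hand side equals $\frac{zk'}{k}\circ w(x,\cdot)$, because $\frac{zk'(z)}{k(z)}=\frac{(1-\alpha)z}{1-z}$ is univalent. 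Setting $p=\log h$ and $q=\log k=-(1-\alpha)\log(1-z)$, we therefore obtain $p(0)=q(0)=0$ and $zp'\prec zq'$.

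\emph{Step 3: integrate the subordination.} The function $q$ is convex univalent, since $1+zq''/q'=1/(1-z)$ has positive real part. By the Hallenbeck--Ruscheweyh / Suffridge lemma (if $q$ is convex, $p(0)=q(0)$ and $zp'\prec zq'$, then $p\prec q$) we get $\log h\prec\log k$. That is, $\log h=\log k\circ\omega$ for some Schwarz function $\omega$. Exponentiating gives $h=k\circ\omega$, which is exactly \eqref{eq:diffquot_subord}.

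The main obstacle is Step 3. Subordination of the derivatives $zp'\prec zq'$ does not pass to $p\prec q$ by naive integration. One must invoke the convexity-based lemma, whose proof uses a Jack/Clunie-type boundary argument, or cite it from \cite{MM2000}. The computations in Steps 1 and 2 are routine.
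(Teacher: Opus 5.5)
Your proof is correct, and it reaches the conclusion through a different key lemma than the paper uses.

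The paper fixes $x$ and writes the Herglotz representation of $(G_f(x,\cdot)-\alpha)/(1-\alpha)$. It then integrates radially to obtain the explicit formula $\log h(z)=(1-\alpha)\int_0^{2\pi}\log\frac{1}{1-e^{it}z}\,d\mu_x(t)$. Finally it argues that a probability average of rotations of the convex function $q(z)=-(1-\alpha)\log(1-z)$ is subordinate to $q$: for $|z|\le r$ each $q(e^{it}z)$ lies in the closed convex set $q(\{|\zeta|\le r\})$, so the average does too.

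You encode the same information as $zp'=(zq')\circ w(x,\cdot)$, that is, $zp'\prec zq'$. This is equivalent to the Herglotz step, because the functions subordinate to $(1-\alpha)z/(1-z)$ are exactly the integrals $(1-\alpha)\int_0^{2\pi}\frac{e^{it}z}{1-e^{it}z}\,d\mu(t)$. You then cite Suffridge's lemma as a black box. Hallenbeck--Ruscheweyh would also work, since $z/(1-z)$ is convex as well as starlike. In effect, the paper proves by hand the special case of the lemma you invoke.

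\textbf{What your route buys.} It is shorter. Its last step is also tighter: writing $h=e^{q\circ\omega}=k\circ\omega$ shows directly that subordination passes through post-composition with any analytic map. This removes the need for the paper's appeal to univalence of the exponential on a sector.

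\textbf{What the paper's route buys.} It is self-contained, using only Herglotz and convexity of $q$. It also produces an explicit integral formula for $\log h$, which can be reused for growth estimates.

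One cosmetic point: the univalence of $zk'/k$ plays no role in the identity $zh'/h=(zk'/k)\circ w(x,\cdot)$, which holds by direct substitution.
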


\begin{proof}
Since $f \in CC_\alpha$, by Herglotz's classical representation result, the function $G_f$ defined by \eqref{G} satisfies
\[
  \frac{G_f(x,z)-\alpha}{1-\alpha}=\int_0^{2\pi} \frac{1+e^{it}z}{1-e^{it}z}\,d\mu_x(t),
\]
where $\mu_x$ is a positive measure on $[0,2\pi]$ with $\int_0^{2\pi}d\mu_x(t)=1$. Whence
\begin{align*}
  \frac{2zf'(z)}{f(z)-f(x)}-\frac{z+x}{z-x}
  =\int_0^{2\pi}\frac{1+(1-2\alpha)e^{it}z}{1-e^{it}z}\,d\mu_x.
\end{align*}
A direct calculation gives
\begin{equation}\label{eq:G_herglotz}
  \frac{f'(z)}{f(z)-f(x)} - \frac{1}{z-x}
  \;=\; (1-\alpha)\int_0^{2\pi}\frac{e^{it}}{1-e^{it}z}\,d\mu_x(t).
\end{equation}
Substituting $z\mapsto rz$, $r\in[0,1]$, and
integrating \eqref{eq:G_herglotz} in $r$, we have
\[
  \int_0^1\!\left(\frac{f'(rz)}{f(rz)-f(x)}-\frac{1}{rz-x}\right)dr=(1-\alpha)\int_0^{2\pi}
  \int_0^1\frac{e^{it}}{1-e^{it}rz}\,dr\,d\mu_x(t).
\]
A straightforward calculation yields
 \[ \frac{1}{z}\log\left(\frac{x}{f(x)}\cdot\frac{f(z)-f(x)}{z-x}\right)= \frac{1-\alpha}{z}\int_0^{2\pi}\log\left(\frac{1}{1-e^{it}z}\right)\,d\mu_x(t),
\]
equivalently
\begin{equation}\label{eq:log_subord}
  \log\left(\frac{x}{f(x)}\cdot\frac{f(z)-f(x)}{z-x}\right)=\int_0^{2\pi}\log\left(\frac{1}{(1-e^{it}z)^{1-\alpha}}\right).
\end{equation}
The right-hand side of~\eqref{eq:log_subord} is an integral average of functions of the form $\log(1-e^{it}z)^{-(1-\alpha)}$, each of which is the logarithm of a function mapping $\D$ into a convex
domain. Since the image of $z\mapsto \log(1-z)^{-(1-\alpha)}$ is a convex sector and convexity is preserved under affine combinations, it follows that 
\[
  \log\left(\frac{x}{f(x)}\cdot\frac{f(z)-f(x)}{z-x}\right) \prec \log\left(\frac{1}{(1-z)^{1-\alpha}}\right).
\]
Exponentiating, which preserves subordination since the exponential is univalent on a suitable sector,  and using \eqref{eq:log_subord} we obtain \eqref{eq:diffquot_subord}.
\end{proof}

\begin{coro}\label{coro:zff}
If $f \in CC_\alpha$, $\alpha\in [0,1)$, then
\begin{equation}\label{eq:zff_subord}
  \frac{zf'(z)}{f(z)} \;\prec\; \frac{1}{(1-z)^{1-\alpha}}.
\end{equation}
\end{coro}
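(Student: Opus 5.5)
The plan is to take the diagonal limit $x\to z$ in the representation \eqref{eq:log_subord} from the proof of Theorem~\ref{theo:subord}. This has to be done pointwise, because Theorem~\ref{theo:subord} only gives subordination in $z$ for each fixed $x$. A shortcut through Proposition~\ref{prop:repx0} does not work. It gives $zf'/f\prec(1-\alpha z)/(1-z)$, whose image is a half-plane, and that half-plane is not contained in the sector $\{|\arg w|<(1-\alpha)\pi/2\}$, which is the image of $(1-z)^{-(1-\alpha)}$.

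Write $L(\zeta)=(1-\alpha)\log\frac{1}{1-\zeta}$. This is a convex univalent map on $\D$ with $L(0)=0$. Fix $z\in\D$ and take $x_n\to z$ with $x_n\neq z$. For each $n$, \eqref{eq:log_subord} gives
\[
\log\Bigl(\frac{x_n}{f(x_n)}\cdot\frac{f(z)-f(x_n)}{z-x_n}\Bigr)=\int_0^{2\pi}L(e^{it}z)\,d\mu_{x_n}(t).
\]
The probability measures $\mu_{x_n}$ have a weak-$*$ convergent subsequence with limit $\mu$. The integrand $t\mapsto L(e^{it}z)$ is continuous on $[0,2\pi]$. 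The left-hand side tends to $\log\bigl(zf'(z)/f(z)\bigr)$, with the branch fixed by value $0$ at $z=0$. Hence
\[
\log\frac{zf'(z)}{f(z)}=\int_0^{2\pi}L(e^{it}z)\,d\mu(t).
\]
Thus $h(z):=\log\bigl(zf'(z)/f(z)\bigr)$ is, for each $z$, an average of points of $L(\{|\zeta|=|z|\})$. Since $L$ is convex univalent, $L(\{|\zeta|\le |z|\})$ is convex, so $h(z)\in L(\{|\zeta|\le|z|\})$.

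Next, set $\omega:=L^{-1}\circ h$. This is analytic on $\D$, because $h(\D)\subset L(\D)$: for $|z|<1$ the closed disc $\{|\zeta|\le|z|\}$ lies in $\D$. We have $\omega(0)=0$ and $|\omega(z)|\le|z|$, so $\omega$ is a Schwarz function and $h=L\circ\omega$, that is, $h\prec L$. Finally we exponentiate. The image $L(\D)$ lies in the strip $|\operatorname{Im}w|<(1-\alpha)\pi/2<\pi$, where $\exp$ is injective. So $zf'/f=\exp(L\circ\omega)=(1-\omega)^{-(1-\alpha)}$, which gives \eqref{eq:zff_subord}.

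The main obstacle is the limiting step $x\to z$. The measures $\mu_x$ are not known to depend continuously on $x$, which is why I pass through weak-$*$ compactness and use continuity of the left-hand side in $x$ (the removable singularity at $x=z$). One should also check that the branch of the logarithm in \eqref{eq:log_subord} is continuous in $x$. It is, since both sides vanish at $z=0$ and depend continuously on $(x,z)$.
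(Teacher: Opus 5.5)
Your argument is correct and follows the paper's route. The paper lets $x\to z$ in \eqref{eq:diffquot_subord} by ``a classical argument''; you carry out that diagonal limit explicitly, one step earlier in \eqref{eq:log_subord}, and then rerun the convex-averaging and univalence step from Theorem~\ref{theo:subord}. The weak-$*$ extraction is harmless but unnecessary, since for every $x_n$ the right-hand side of \eqref{eq:log_subord} already lies in the compact convex set $L(\{|\zeta|\le|z|\})$, so the limit $\log\bigl(zf'(z)/f(z)\bigr)$ lies there as well.
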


\begin{proof}
A classical argument shows that it suffices to make $x$ tend towards $z$ in  \eqref{eq:diffquot_subord}.
\end{proof}

\begin{coro}\label{coro:fbound}
Let $f \in CC_\alpha$, $\alpha \in [0,1)$, then
\begin{equation*}\label{eq:fprime_bound}
  \frac{1}{(1+(1-\alpha)|z|)(1+|z|)^{1-\alpha}}\leq|f'(z)| \leq \frac{1}{(1-(1-\alpha)|z|)(1-|z|)^{1-\alpha}},  \qquad z \in \D.
\end{equation*}
\end{coro}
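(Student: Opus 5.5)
We will obtain the bound by writing $f'$ as a product of two factors and estimating each separately. Since $f'(z)=\frac{f(z)}{z}\cdot\frac{zf'(z)}{f(z)}$, we have
\[
|f'(z)|=\left|\frac{f(z)}{z}\right|\cdot\left|\frac{zf'(z)}{f(z)}\right| .
\]

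For the first factor we use Proposition~\ref{prop:repz0}. It gives $f(z)/z=1/(1-(1-\alpha)\varphi(z))$ with $\varphi$ a Schwarz function. The Schwarz lemma gives $|\varphi(z)|\le|z|$, and therefore
\[
\frac{1}{1+(1-\alpha)|z|}\le\left|\frac{f(z)}{z}\right|\le\frac{1}{1-(1-\alpha)|z|}.
\]

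For the second factor we use Corollary~\ref{coro:zff}: $zf'/f\prec(1-z)^{-(1-\alpha)}$. So there is a Schwarz function $\omega$ with $zf'(z)/f(z)=(1-\omega(z))^{-(1-\alpha)}$ and $|\omega(z)|\le|z|$. For $|w|\le r<1$ the modulus $|(1-w)^{-(1-\alpha)}|=|1-w|^{-(1-\alpha)}$ is determined by $|1-w|$. Since $1-r\le|1-w|\le1+r$, we get
\[
(1+|z|)^{-(1-\alpha)}\le\left|\frac{zf'(z)}{f(z)}\right|\le(1-|z|)^{-(1-\alpha)}.
\]
This step uses only the real exponent $1-\alpha\ge0$ and the principal branch, which is real-positive at $w=0$.

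Multiplying the two pairs of inequalities gives exactly the claimed two-sided estimate for $|f'(z)|$. The argument also covers $z=0$, where both sides reduce to $1$.

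The only delicate point is the lower bound for $|zf'/f|$. Subordination controls the value through $\omega(z)$, and we must make sure the extremes of $|1-w|$ over the disk $|w|\le|z|$ are $1\mp|z|$. This is immediate because $|1-w|$ is the distance from $w$ to $1$.

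We also rely on the validity of Corollary~\ref{coro:zff} itself, which is taken as given. If we preferred to avoid it, we could instead use $CC_\alpha\subseteq S^*_{(1+\alpha)/2}$. Then $zf'/f\prec(1-\alpha z)/(1-z)$, which gives the weaker bounds $(1-\alpha|z|)/(1+|z|)$ and $(1+\alpha|z|)/(1-|z|)$. These do not match the stated form, so the corollary is the right tool here.
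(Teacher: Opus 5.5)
Your proposal is correct and is essentially the paper's proof. It uses the same factorization $|f'(z)|=|f(z)/z|\cdot|zf'(z)/f(z)|$, with Proposition~\ref{prop:repz0} and the Schwarz lemma for the first factor and Corollary~\ref{coro:zff} for the second. One small correction to your closing aside: done sharply, $CC_\alpha\subseteq S^*_{(1+\alpha)/2}$ gives $|zf'(z)/f(z)|\geq(1+\alpha|z|)/(1+|z|)\geq(1+|z|)^{-(1-\alpha)}$ by Bernoulli's inequality, so Corollary~\ref{coro:zff} is genuinely needed only for the upper bound.
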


\begin{proof}
By \eqref{eq:zff_subord} 
\[
 \frac{1}{(1+|z|)^{1-\alpha}}\frac{|f(z)|}{|z|} \leq|f'(z)|\leq\frac{1}{(1-|z|)^{1-\alpha}}     \frac{|f(z)|}{|z|}.
\]
Now, by \eqref{eq:repz0} and the Schwarz lemma
\begin{equation}\label{prop:repz1}
 \frac{1}{1+(1-\alpha)|z|}\leq \left|\frac{f(z)}{z}\right|\leq \frac{1}{1-(1-\alpha)|z|},
\end{equation}
for all $z\in \D$.
\end{proof}

It is important to note that the bound obtained in \eqref{prop:repz1} improves upon the one obtained in Lemma 2.5 in \cite{paper}.


\end{document}